\documentclass[11pt]{article}

% Language setting
% Replace `english' with e.g. `spanish' to change the document language
\usepackage[english]{babel}

% Set page size and margins
% Replace `letterpaper' with `a4paper' for UK/EU standard size
\usepackage[a4paper,textheight=11cm]{geometry}

% Useful packages
\usepackage{amsmath}
\usepackage{graphicx}
\usepackage[colorlinks=true, allcolors=blue]{hyperref}

\usepackage{amsmath,amscd,amsthm,makeidx,graphicx,url,bm,bbm,mathrsfs,palatino,bm,mathtools,latexsym}
\usepackage[bbgreekl]{mathbbol}

\usepackage{subfig}

\usepackage{amssymb}
\usepackage{a4wide,xy,graphicx}
\usepackage{tikz-cd}
\usepackage{enumerate,comment}
\usepackage{titlesec}
\setcounter{secnumdepth}{4}
\usepackage{tikz-cd}

\numberwithin{equation}{section}

%\usepackage[backref]{hyperref}

% THESE ARE TO CHECK FOR UNUSED REFS

%\usepackage{showlabels}
%\usepackage[notref,notcite]{showkeys}
%\usepackage{refcheck}

%\numberwithin{equation}{section} 

%\DeclareMathAlphabet{\pazocal}{OMS}{zplm}{m}{n}

\makeindex

\xyoption{all}

\newtheorem{theorem}{Theorem}[section]
\newtheorem{proposition}[theorem]{Proposition}
\newtheorem{corollary}[theorem]{Corollary}

\newtheorem{lemma}[theorem]{Lemma}

\theoremstyle{remark}
\newtheorem{remark}[theorem]{Remark}

\theoremstyle{definition}

%\numberwithin{theorem}

%\newtheorem{nothing}[theorem]{}

%\newtheorem{margin}[]{Note to authors}

\makeatletter
\newcommand*\sbullet{\mathpalette\bigcdot@{.7}}
\newcommand*\bigcdot@[2]{\mathbin{\vcenter{\hbox{\scalebox{#2}{$\m@th#1\bullet$}}}}}
\makeatother

\def\beq{\begin{eqnarray}}
	\def\eeq{\end{eqnarray}}
\def\bes{\begin{eqnarray*}}
	\def\ees{\end{eqnarray*}}

\def\bP{{\mathbb P}}

\def\calE{{\mathcal E}}

\def\h{\mathfrak{h}}

\DeclareMathOperator{\Spec}{Spec} 
\DeclareMathOperator{\Hom}{Hom}

\def\bE{{\mathbb{E}}}
\def\bM{{\mathbb{M}}}

\def\A{{\mathbb A}}

\def\C{\mathbb{C}}

\def\calV{{\mathcal{V}}}

\def\calH{\mathcal{H}}
\def\calW{\mathcal{W}}

\def\n{{\mathbf{n}}}

\def\k{\mathfrak{k}}
\def\N{\mathbb{N}}

\def\t{\mathfrak{h}}
\def\m{\mathfrak{m}}
\def\calC{{\mathcal C}}
\def\calG{{\mathcal G}}
\def\calB{{\mathcal B}}
\def\calM{{\mathcal M}}
\def\calZ{{\mathcal Z}}
\def\m{{\mathfrak{m}}}

\def\calO{{\mathcal O}}

\def\sl{{\mathfrak s\mathfrak l}}

\def\m{{\mathfrak m}}
\newcommand{\p}{\mathfrak p}

\newcommand{\nc}{\newcommand}

\def\calZ{\mathcal{Z}}

% added by Michael

\newcommand{\g}{\mathfrak{g}}
\renewcommand{\n}{\mathfrak{n}}
\renewcommand{\a}{\mathfrak{a}}

\usepackage{leftidx}

\newcommand{\calR}{\mathcal{R}}
\newcommand{\calS}{\mathcal{S}}

\newcommand{\Gr}{\textnormal{Gr}}

\newcommand{\End}{\textnormal{End}}

\newcommand{\opp}{\textnormal{op}}

%\newcommand{\bU}{\mathbf{U}}

%%%

\nc{\op}[1]{\mathop{\mathchoice{\mbox{\rm #1}}{\mbox{\rm #1}}
		{\mbox{\rm \scriptsize #1}}{\mbox{\rm \tiny #1}}}\nolimits}
\nc{\al}{\alpha}

\nc{\ep}{\varepsilon} 
\nc{\ga}{\gamma} 
\nc{\Ga}{\Gamma}
\nc{\la}{\lambda} 
\nc{\La}{\Lambda} 
\nc{\si}{\sigma}
\nc{\Sig}{{\Gamma}} 
\nc{\Om}{\Omega} 
\nc{\om}{\omega}

\nc{\SL}{\mathrm{SL}} 
\nc{\GL}{\mathrm{GL}} 
\nc{\SO}{\mathrm{SO}} 
\nc{\Sp}{\mathrm{Sp}} 
\nc{\PSp}{\mathrm{PSp}}

\nc{\PGL}{\mathrm{PGL}}
\nc{\G}{\mathrm{G}}

\nc{\W}{\mathrm{W}}
\nc{\Lg}{\mathrm{L}}
\nc{\Pg}{\mathrm{P}}
%\nc{\T}{{{\mathbb \C}^\times}}
%\nc{\T}{{\mathbb T}}
\nc{\calL}{{\mathcal L}}
\nc{\Sym}{{\rm Sym}}

%%%DP's macros

%%%%

\nc{\Frob}{\mathrm{Frob}}
\nc{\spec}{{\rm Spec}}

\nc{\rN}{\mathrm{N}}

%\def\Lie{\rm Lie}

 % \ip a b producto interno 
 % \bra a b le pone < >
 % \par a b le pone ( )
 % \par a b le pone ( )
 % \par a b le pone | |
 % \par a b le pone { }
 %se puede cambiar a ^{*}
%\renewcommand{\R}{{\mathrm{R}}}

\usepackage{longtable}

%\mat{1}{2}{3}{4} or \mat 1 2 3 4

%\smat{1}{2}{3}{4} or \smat 1 2 3 4

%\mat{1}{2}{3}{4} or \mat 1 2 3 4

%\smat{1}{2}{3}{4} or \smat 1 2 3 4

\nc{\cpt}{{\op{cpt}}} \nc{\Dol}{{\op{Dol}}} \nc{\DR}{{\op{DR}}}
\nc{\B}{{\op{B}}} \nc{\Triv}{\op{Triv}} \nc{\Hod}{{\op{Hod}}}
\nc{\Log}{{\op{Log}}} \nc{\Exp}{{\op{Exp}}} \nc{\Est}{E_{\op{st}}}
\nc{\Hst}{H_{\op{st}}} \nc{\Left}[1]{\hbox{$\left#1\vbox to
		10.5pt{}\right.\nulldelimiterspace=0pt \mathsurround=0pt$}}
\nc{\Right}[1]{\hbox{$\left.\vbox to
		10.5pt{}\right#1\nulldelimiterspace=0pt \mathsurround=0pt$}}
\nc{\LEFT}[1]{\hbox{$\left#1\vbox to
		15.5pt{}\right.\nulldelimiterspace=0pt \mathsurround=0pt$}}
\nc{\RIGHT}[1]{\hbox{$\left.\vbox to
		15.5pt{}\right#1\nulldelimiterspace=0pt \mathsurround=0pt$}}

\nc{\bee}{{\bf E}} 
%\nc{\bphi}{{\bf \Phi}}
%\def\asurb#1#2{\genfrac{}{}{0pt}{}{#1}{#2}}

\title{Center of Kostant algebra}
\author{Tam\'as Hausel}

\begin{document}
\maketitle
{\begin{center}{En l'honneur de G\'erard Laumon}\end{center}}
\begin{abstract}
In this note, following Mui\'c--Savin \cite{muic-savin},  we compute the center of Kostant algebra, introduced in \cite{kostant} as strongly commuting algebra. We explain how it encodes information on tensor products between a finite-dimensional and a Verma module, and  about the structure of principal series representations via the work of Bernstein--Gelfand \cite{bernstein-gelfand}. We also discuss conjectured relationships to Langlands duality.
\end{abstract}

Our main motivation is  to bridge the geometry of the Hitchin integrable system \cite{hitchin-stable} arising in mathematical physics to the Langlands program in number theory. Ng\^{o}'s proof of the fundamental lemma \cite{ngo}, following Laumon and Ng\^{o}'s \cite{laumon-ngo} proof in the unitary case, gave a spectacular example of such connections, but there are hints already in \cite{Laumon}. 

In \cite{hausel-thaddeus} we observed that the Hitchin systems for $\SL_n$ and Langlands dual $\PGL_n$ satisfy the prescription of Strominger--Yau--Zaslow \cite{strominger-yau-zaslow} for mirror symmetry. As a test of this observation, we formulated the topological mirror symmetry conjecture and signalled the hope to relate to the geometric Langlands correspondence of Drinfeld and Laumon \cite{Laumon0}. 

%Kapustin and Witten \cite{kapustin-witten} managed to derive both our mirror symmetry picture \cite{hausel-thaddeus} and a physics approach to the geometric Langlands correspondence from S-duality in $4$ dimensions. In turn \cite{donagi-pantev} proved the Strominger--Yau--Zaslow mirror symmetry between general Langlands dual Hitchin systems, and formulated the classical limit of the geometric Langlands correspondence. 

The topological mirror symmetry conjecture was first settled in \cite{groechenig-etal1} using p-adic integration techniques and then by  \cite{maulik-shen1} using Ng\^o's geometric techniques from \cite{ngo} as suggested by \cite{hausel-global}. In turn, \cite{groechenig-etal2} used their p-adic integration technique to reprove Ng\^o's geometric stabilisation result from \cite{ngo}, reversing the logic proposed in \cite{hausel-global}.

As we will discuss it in some more detail in Section~\ref{mirror}, the motivation for this paper comes from \cite{hausel-hitchin}, where very stable Higgs bundles, generalising Drinfeld and Laumon's  very stable bundles in \cite{Laumon}, were studied using elementary Hecke transformations. It is explained in \cite{hausel-ICM} how these can be understood using minuscule multiplicity algebras, and in turn \cite{hausel-big} introduced big algebras to generalise these ideas to general Hecke transformations. Here we will show that the medium algebras of \cite{hausel-big}, which are important subalgebras of the big algebras, naturally lead to representation theory relevant to the local Langlands program at the complex place. In future work  we will explore this representation theory from the perspective of big algebras. 

One can consider our approach  somewhat orthogonal to Ng\^o's, in that we are studying the geometry of Lagrangian multi-sections of the Hitchin fibration, while Ng\^o and Laumon were studying the Hitchin fibers. 

\noindent{\bf Acknowledgement.} The author would like to thank G\'erard Laumon for his contagious enthusiasm for geometric approaches to the Langlands program. We benefited from comments of and discussions  with Anne-Marie Aubert, Mischa Elkner, Oscar Garc\'ia-Prada, Nigel Higson, Jakub L\"owit, Shon Ng\^{o} and Catharina Stroppel.   The author's research is supported by an FWF grant ``Geometry of the tip of the global nilpotent cone'' number P 35847 and by an ERC advanced grant ``Representation theory, equivariant topology and Langlands duality via fixed point schemes" number 101199663. Incidentally, the global nilpotent cone was introduced in \cite{Laumon} and the "Langlands duality" is meant to be the semi-classical limit of \cite{Laumon0}.

\section{ Kostant's strongly commuting algebras}
\label{kostants}
For a semisimple complex Lie algebra $\g$ with Cartan subalgebra $\t\subset \g$, universal enveloping algebra $U(\g)$ and finite-dimensional irreducible representation  $\pi^\mu:U(\g)\to \End(V^\mu)$ of highest weight $\mu\in \Lambda^+\subset \Lambda \subset \t^*$  dominant, integral weight, Kostant introduced  \cite[(4.6.2)]{kostant} the strongly commuting algebra \begin{align}\label{def}
\calR^\mu(\g):=(U(\g)\otimes\End(V^\mu))^\g\end{align} as an associative algebra over the center $Z(\g):=U(\g)^\g=Z(U(\g))\subset U(\g)$. If \begin{align}\label{delta} \delta:U(\g)\to U(\g)\otimes \End(V^\mu),\end{align} defined by the condition $\delta(x)=x\otimes 1+1\otimes \pi^\mu(x)$ for $x\in \g$, denotes the diagonal map  then the $\g$-invariant subring $\calR^\mu(\g)$ of $U(\g)\otimes \End(V^\mu)$ can be thought to be the commutant of $\delta(U(\g))$: $$\calR^\mu(\g)=C_{U(\g)\otimes \End(V^\mu)}(\delta(U(\g)))\subset U(\g)\otimes \End(V^\mu).$$  

 By \cite[Proposition 2.13]{higson} (see Lemma~\ref{longhigson} below), Kostant algebras coincide, in the complex Lie algebra case, with the Hecke algebras appearing in \cite{muic-savin} and \cite{higson}, going back at least to   \cite{lepowsky} and \cite{knapp-vogan}; but in a way as far as \cite{harish-chandra} and \cite{parthasarathy-rao-varadarajan} in our case of complex Lie algebra $\g$.  Therefore, their representation theory plays an important role in classifying admissible irreducible representations of a connected complex Lie group $\G$, with Lie algebra $\g$, considered as a real Lie group. This classification is part of the local Langlands program at the complex place.  We can also mention \cite{joseph}, where the Kostant algebras were considered by the name relative Yangians, and their representation theory was studied from the perspective of  the Bernstein-Gelfand-Gelfand category $\calO$ \cite{humphreys}. Kostant algebras were also studied in the recent \cite{sun-zhao}.

Rozhkovskaya \cite[Theorem 4.1]{rozhkovskaya} proved that $\calR^\mu(\g)$ is commutative if and only if $V^\mu$ is weight multiplicity free, i.e., if $\dim(V^\mu_\lambda)\leq 1$ for any weight $\lambda\in\Lambda$. Kostant \cite[Remark 4.9]{kostant} observed that $\delta(Z(\g))\subset Z(\calR^\mu(\g))$ is in the center of $\calR^\mu(\g)$. We will show below in Theorem~\ref{explicit} that they generate the center $Z(\calR^\mu(\g))$ over $Z(\g)$. 

We denote the $Z(\g)$-subalgebra $$\calZ^\mu(\g):=\langle \delta(x)_{x\in Z(\g)} \rangle_{Z(\g)}\subset \calR^\mu(\g),$$ given by generators, the {\em filtered medium algebra.} Thus $\calZ^\mu(\g)$ is generated by $Z(\g)$ and $ \delta(Z(\g))$. Consequently, we can compute $\Spec(\calZ^\mu(\g))$ by embedding it in \begin{align} \label{ambient} \Spec(Z(\g))\times \Spec(Z(\g)) \cong \t^*/\!/W\!\sbullet\times\, \t^*/\!/W \sbullet,\end{align} where the Weyl group $W$ acts on $\t^*$ via the $\sbullet$-action, i.e. $w\sbullet v = w(v+\rho)-\rho$, where $v\in \t^*$, $w\in W$ and $\rho\in \Lambda\subset  \t^*$ is the sum of fundamental weights, or equivalently the half-sum of positive roots. In \eqref{ambient} we used the Harish-Chandra isomorphism $Z(\g)\cong U(\t)^{W\sbullet}\cong S(\h)^{W\sbullet}$, where $S(\h)$ denotes the symmetric algebra of $\h$.

First, we  consider  the fiber $\calR^\mu_\chi(\g):=\calR^\mu(\g)/(\ker(\chi))$ for a character $\chi:Z(\g)\to \C \in \Spec(Z(\g))$, where $(\ker(\chi))\triangleleft \calR^\mu(\g)$ is the left ideal generated by $\ker(\chi)$. Let $U_\chi:=U(\g)/(\ker(\chi))$ denote the maximal quotient of $U(\g)$. Then we have the following
\begin{lemma}\begin{align} \label{have} \calR^\mu_\chi(\g)=\calR^\mu(\g)/(\ker(\chi))=(U_\chi\otimes \End(V^\mu))^\g.
\end{align} \end{lemma}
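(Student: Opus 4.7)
The plan is to derive the identification from the short exact sequence of adjoint $\g$-modules
\[
0 \to \ker(\chi)\,U(\g) \to U(\g) \to U_\chi \to 0,
\]
tensor it with $\End(V^\mu)$, and apply the $\g$-invariants functor $(-)^\g$. To set this up, first I would record that $U(\g)$ is locally finite semisimple under the adjoint action (the PBW filtration is $\ad(\g)$-stable with finite-dimensional steps, each semisimple by Weyl's theorem), a property preserved after tensoring with the finite-dimensional module $\End(V^\mu)\cong V^\mu\otimes (V^\mu)^*$. Consequently $(-)^\g$ is exact on the modules in question. Also, since $\ker(\chi)\subset Z(\g)$ consists of $\ad$-invariants, the left ideal $\ker(\chi)\,U(\g)$ is a $\g$-submodule, so the displayed sequence is genuinely one of $\g$-modules.

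With exactness secured, applying $(-\otimes\End(V^\mu))^\g$ produces
\[
0 \to \bigl(\ker(\chi)\,U(\g)\otimes\End(V^\mu)\bigr)^\g \to \calR^\mu(\g) \to \bigl(U_\chi\otimes\End(V^\mu)\bigr)^\g \to 0,
\]
reducing the lemma to identifying the leftmost term with the left ideal $(\ker(\chi))\triangleleft\calR^\mu(\g)$. The inclusion $(\ker(\chi))\subseteq\bigl(\ker(\chi)\,U(\g)\otimes\End(V^\mu)\bigr)^\g$ is immediate: for $k\in\ker(\chi)$ and $r\in\calR^\mu(\g)$, the element $(k\otimes 1)\cdot r$ lies in both.

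The main obstacle will be the reverse inclusion. My plan is to use that $Z(\g)\otimes 1$ is central in $U(\g)\otimes\End(V^\mu)$ and therefore preserves the $\g$-isotypic decomposition
\[
U(\g)\otimes\End(V^\mu) = \calR^\mu(\g) \oplus M',
\]
where $M'$ collects the non-trivial isotypic components. Left multiplication by $\ker(\chi)$ then gives
\[
\ker(\chi)\,U(\g)\otimes\End(V^\mu) = \ker(\chi)\cdot\calR^\mu(\g)\;\oplus\;\ker(\chi)\cdot M',
\]
with the first summand sitting inside $\calR^\mu(\g)$ (so equal to its own $\g$-invariants) and the second being a $\g$-submodule of $M'$ (hence with vanishing trivial isotypic). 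Taking $\g$-invariants therefore yields exactly $\ker(\chi)\cdot\calR^\mu(\g)=(\ker(\chi))$, closing the argument.
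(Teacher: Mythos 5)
Your proof is correct, and it takes a genuinely different route from the paper's. The paper argues by a dimension count: it asserts that the natural map $\calR^\mu(\g)\to (U_\chi\otimes\End(V^\mu))^\g$ has kernel $(\ker(\chi))$, obtains the embedding of the quotient, and then shows it is an isomorphism by computing both dimensions to be $\sum_{\lambda}(m^\mu_\lambda)^2$ --- on the left using Rozhkovskaya's $Z(\g)$-module isomorphism $\calR^\mu(\g)\cong\bigoplus_\lambda\mathrm{Mat}_{m^\mu_\lambda}(Z(\g))$, and on the right using Kostant's theorem that $U(\g)$ is a free $Z(\g)$-module with adjoint decomposition $H(\g)\cong\bigoplus_\nu (V^\nu)^{m^\nu_0}$. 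Your argument instead uses exactness of $(-)^\g$ on locally finite semisimple $\g$-modules (coming from Weyl's theorem) to produce the short exact sequence directly, and then pins down the left-hand term via the isotypic decomposition $U(\g)\otimes\End(V^\mu)=\calR^\mu(\g)\oplus M'$, which is preserved by multiplication by the central subalgebra $\ker(\chi)\otimes 1$. This is cleaner and more self-contained: it avoids both external inputs and, in fact, supplies the precise identification of the kernel as $(\ker(\chi))$ that the paper states but does not explicitly justify (only the containment $(\ker(\chi))\subseteq\ker$ is immediate; the reverse inclusion is exactly what your isotypic-decomposition step delivers). What the paper's route buys in exchange is the explicit dimension formula $\dim\calR^\mu_\chi(\g)=\sum_\lambda(m^\mu_\lambda)^2=\dim\End(V^\mu)^\h$, which is useful information recorded in \eqref{dim}; your approach, while logically tighter, does not produce that count as a byproduct.
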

\begin{proof}
The second equation above follows, because the natural map $\calR^\mu(\g)\to (U_\chi\otimes \End(V^\mu))^\g
$ has kernel $(\ker(\chi))$, thus we have the embedding \begin{align}\label{embed} \calR^\mu(\g)/(\ker(\chi))\hookrightarrow(U_\chi\otimes \End(V^\mu))^\g.\end{align} Additionally, we determine their dimensions
\begin{align}\label{dim} \dim(\calR^\mu(\g)/(\ker(\chi)))=\sum_{\lambda\in \Lambda} (m^\mu_\lambda)^2=\dim(\End(V^\mu)^\h)=\dim((U_\chi\otimes \End(V^\mu))^\g).
\end{align}
The first equation follows because by \cite[Proposition 2.1]{rozhkovskaya} as $Z(\g)$-modules we have $$\calR^\mu(\g)\cong_{Z(\g)} \sum_{\lambda\in\Lambda} Mat_{m^\mu_\lambda}(Z(\g)),$$ where $m^\mu_\lambda:=\dim(V^\mu_\lambda)$ is the dimension of the weight spaces in the decomposition as an $\t$-module $$V^\mu\cong_\t \bigoplus_{\lambda\in \Lambda} V^\mu_\lambda.$$ The second equation of \eqref{dim} is straightforward. The third follows from \begin{align} \label{kostant} U(\g)\cong_{Z(\g)} Z(\g)\otimes H(\g)\end{align} being \cite[Theorem 0.13]{kostant-lie} a free $Z(\g)$-module. While as a $\g$-module under the adjoint  action \eqref{kostant} matches $U(\g)$ with the trivial $\g$-action on $Z(\g)$ and  $$H(\g)\cong_\g \bigoplus_{\nu\in \Lambda^+}  (V^\nu)^{m^\nu_0}.$$  
Now \eqref{have} follows from \eqref{embed} and \eqref{dim}. 
\end{proof}
Let $\lambda\in \t^*$ be such that the infinitesimal character  \begin{align}\label{character}\chi_\lambda:Z(\g)\to \C\end{align} of the Verma module $M_\lambda$, of highest weight $\lambda$, satisfies $\chi_\lambda=\chi$. By the Duflo-Joseph theorem \cite[Corollary 18.8]{etingof}, cf.  \cite[6.2.(8)]{jantzen}, we have $U_{\chi_\lambda}\cong \Hom_{fin}(M_\lambda,M_\lambda)$, where $\Hom_{fin}$ is the $\g$-finite part of $\Hom$ (cf. \cite[18.1]{etingof}), i.e. those homomorphisms which are contained in finite-dimensional $\g$-subrepresentations. Then we get \begin{align}\label{verma} \calR^\mu_\chi(\g) &=(U_\chi\otimes \End(V^\mu))^\g=(\Hom_{fin}(M_\lambda,M_\lambda)\otimes \End(V^\mu))^\g\nonumber \\ &=(\End_{fin}(M_\lambda\otimes V^\mu))^\g=(\End(M_\lambda\otimes V^\mu))^\g=\End_\calO(M_\lambda\otimes V^\mu)\end{align}
as the endomorphism algebra of $M_\lambda\otimes V^\mu$ in the BGG category $\calO$ \cite{humphreys}. The fourth equation follows because the $\g$-invariant part is automatically in the  $\g$-finite part. 

%Thus, the map \begin{align}\label{g}g:Z(\g)\otimes Z(\g)\to \calZ^\mu(\g)\end{align} given by $g(z_1,z_2)=z_1\delta(z_2)\in Z(\calR^\mu(\g))$ in the presentation \eqref{verma} gives the $\g$-linear map $g(z_1,z_2):M_\lambda\otimes V^\mu\to M_\lambda\otimes V^\mu$ diagonally given in the decomposition as an $\t$-module: $$M_\lambda\otimes V^\mu\cong \bigoplus_{\xi\in \t^*} (M_\lambda\otimes V^\mu)(\xi)$$ by  \begin{align}\label{gindec}g(z_1,z_2)|_{(V^\mu\otimes M_\lambda)(\xi)}=\chi_\lambda(z_1)\chi_\xi(z_2) Id_{(V^\mu\otimes M_\lambda)(\xi)}.\end{align}  

Let $S^\mu:=\{\mu_i\}\subset \Lambda$ denote the set of weights in $V^\mu$. Then the set  of infinitesimal characters appearing in $M_\lambda\otimes V^\mu$ is $\{\chi_{\lambda+\mu_i}\}$
by \cite[Theorem 3.6]{humphreys}. Because $M_\lambda\otimes V^\mu$ has a Verma filtration whose subqotients are $M_\lambda+\mu_i$ on which $\delta(Z(\g))$ acts by scalar via the character $\chi_{\lambda+\mu_i}$  Thus the spectrum of the image of $ \delta(Z(\g))$ in $\calR^\mu_\chi$ is  supported at $\{[\lambda+\mu_i]_{\sbullet}\}\subset \t^*/\!/W\sbullet=\Spec(Z(\g))$, where $$[\lambda+\mu_i]_{\sbullet}:=W{\sbullet}(\lambda+\mu_i)\in \t^*/\!/W{\sbullet}.$$ Using this and arguments of \cite{muic-savin} we will prove the following.

\begin{theorem}\label{explicit}
\begin{enumerate} \item The filtered medium algebra $\calZ^\mu(\g)=Z(\calR^\mu(\g))$ coincides with the center of the Kostant algebra.\item $\Spec(\calZ^\mu(\g))\subset\Spec(Z(\g))\times \Spec(Z(\g))$ is the reduced subscheme with  $$\Spec(\calZ^\mu(\g))(\C)= \{ ([\lambda]_{\sbullet},[\lambda+\mu_i]_{\sbullet}) \mid \lambda\in \t^*, {\mu_i}\in S^\mu \}\subset \Spec(Z(\g))\times \Spec(Z(\g)).$$ In other words $\calZ^\mu(\g)\cong Z(\g)\otimes Z(\g)/I_\mu$ where $Q\in I_\mu\triangleleft Z(\g)\otimes Z(\g)\subset \C[\t^* \oplus \t^*] $ if and only if $Q(\lambda,\lambda+\mu_i)=0$ for all $\lambda\in \t^*$ and $\mu_i\in S^\mu$. \end{enumerate}
\end{theorem}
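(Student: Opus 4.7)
The plan is to prove part (2) first, since its explicit scheme description of $\calZ^\mu(\g)$ feeds into part (1).

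For (2), I analyse the surjection $g:Z(\g)\otimes Z(\g)\twoheadrightarrow \calZ^\mu(\g)$ from \eqref{g} and show $\ker g=I_\mu$. The containment $\ker g\supseteq I_\mu$ comes from a generic-fibre computation: for $\lambda\in\t^*$ such that the central characters $\chi_{\lambda+\mu_i}$ are pairwise distinct across the distinct weights $\mu_i$ of $V^\mu$ (a Zariski-open condition), the vanishing of $\mathrm{Ext}^1$ between category $\calO$ modules of different central characters refines Humphreys' Verma filtration of $M_\lambda\otimes V^\mu$ into a direct sum $\bigoplus_{\mu_i\in S^\mu}M_{\lambda+\mu_i}^{\oplus m^\mu_{\mu_i}}$. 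Formula \eqref{gindec} then says $g(Q)$ acts on the $M_{\lambda+\mu_i}$-summand as the scalar $Q(\lambda,\lambda+\mu_i)$, which is $0$ when $Q\in I_\mu$. Freeness of $\calR^\mu(\g)$ over $Z(\g)$ from \eqref{kostant} promotes this generic fibrewise vanishing to $g(Q)=0$ in $\calR^\mu(\g)$. The reverse containment is analogous: if $Q(\lambda_0,\lambda_0+\mu_{i_0})\neq 0$, then by Zariski density I may take $\lambda_0$ generic, and $g(Q)$ is then nonzero on the corresponding Verma summand. Since $I_\mu$ is by definition the vanishing ideal of the prescribed union of ``shifted diagonals'' $\{(\lambda,\lambda+\mu_i)\}$, this gives both $\calZ^\mu(\g)\cong (Z(\g)\otimes Z(\g))/I_\mu$ and the stated reduced-scheme description.

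For (1), the inclusion $\calZ^\mu\subseteq Z(\calR^\mu)$ is automatic: $Z(\g)\otimes 1$ is central in $U(\g)\otimes\End(V^\mu)$, while $\delta(Z(\g))\subseteq\delta(U(\g))$ centralises $\calR^\mu(\g)$ by the commutant definition. For the reverse inclusion, the generic-fibre computation gives $\calR^\mu_\chi\cong\prod_{\mu_i}\mathrm{Mat}_{m^\mu_{\mu_i}}(\C)$ with centre $\prod_{\mu_i}\C$, and this centre is exhausted by $\delta(Z(\g))$ via the Chinese remainder surjection $Z(\g)\twoheadrightarrow\prod_{\mu_i}\C$, $z\mapsto(\chi_{\lambda+\mu_i}(z))_{\mu_i}$ (valid because distinct $[\lambda+\mu_i]_\sbullet$ separate the characters). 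Hence $\calZ^\mu$ and $Z(\calR^\mu)$ share generic rank as $Z(\g)$-modules, and $Z(\calR^\mu)/\calZ^\mu$ is torsion over $Z(\g)$.

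To kill this torsion I exploit the structure from (2): each shifted-diagonal component $\Gamma_{[\mu_i]}=\t^*/W_{\mu_i}\sbullet$ is a finite flat cover of $\Spec(Z(\g))=\t^*/W\sbullet$ of degree $|W\mu_i|$ (miracle flatness for quotients of $\t^*$ by subgroups of $W$), so $\Spec(\calZ^\mu(\g))$ is flat over $\Spec(Z(\g))$ of constant scheme-theoretic degree equal to the total number of distinct weights of $V^\mu$; collisions of sheets at singular $\lambda$ manifest as multiplicity in the fibre rather than a drop in degree. Together with a fibrewise equality $\calZ^\mu\otimes\C_\chi=Z(\calR^\mu_\chi)$ at every closed point $\chi$, this flatness forces $\calZ^\mu(\g)=Z(\calR^\mu(\g))$ globally. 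The main obstacle is this fibrewise equality at \emph{singular} $\chi$, where several $[\lambda+\mu_i]_\sbullet$ collide: the block centres of $Z(\End_\calO(M_\lambda\otimes V^\mu))$ acquire non-semisimple nilpotent pieces (modelled on Soergel coinvariant rings such as $\C[x]/(x^2)$), and these must still be hit by $\delta(Z(\g))$. The generic Chinese-remainder argument does not see these nilpotent directions, so this is where the detailed analysis of \cite{muic-savin} of endomorphism algebras of tensor products in category $\calO$ enters and supplies the final surjectivity step.
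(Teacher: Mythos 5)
Your treatment of part (2) is a legitimate alternative to the paper's argument and is essentially correct: the generic splitting $M_\lambda\otimes V^\mu\cong\bigoplus_{\mu_i}M_{\lambda+\mu_i}^{\oplus m^\mu_{\mu_i}}$, the scalar action \eqref{gindec}, and the Zariski-density step using freeness of $\calR^\mu(\g)$ over the domain $Z(\g)$ do combine to pin down $\ker g=I_\mu$. This is a cleaner, more self-contained route to part (2) than what the paper records.

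Part (1) is where the proposal breaks, and for two separate reasons. First, the flatness claim for $\Spec(\calZ^\mu(\g))\to\Spec(Z(\g))$ does not follow from flatness of the individual components $\t^*/\!/W_{\mu_i}\sbullet\to\t^*/\!/W\sbullet$: a union of finite flat covers inside a product is not automatically flat, because flatness over the regular base is equivalent to the (reduced, possibly reducible) total space being Cohen--Macaulay, and unions of linear subspaces of codimension $>1$ are not in general Cohen--Macaulay. You would have to establish this separately, and it is not trivial. Second and more importantly, you yourself flag that the fibrewise equality $\calZ^\mu\otimes\C_\chi=Z(\calR^\mu_\chi)$ at singular $\chi$ --- where the block endomorphism algebras acquire nilpotents --- is the crux, and you do not supply it. Citing ``the detailed analysis of Mui\'c--Savin of endomorphism algebras in category $\calO$'' mischaracterizes what they (and the paper) actually do: Mui\'c--Savin do not analyse singular blocks fibre by fibre. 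Rather, the paper's argument is global: the Lepowsky map $p_\mu:\calR^{\mu^{\!*}}\!(\g)\hookrightarrow U(\a)\otimes\End(V^\mu)^\m$ from \eqref{heckeanti} embeds the whole Kostant algebra at once; a Zariski-density/closedness argument forces $p_\mu(Z(\calR^{\mu^{\!*}}))$ to land in $Z(U(\a)\otimes\End(V^\mu)^\t)$, i.e.\ to be a function on $\calV^\mu$; Proposition~\ref{prop} (linkage for principal series, via Bernstein--Gelfand) forces that function to be $W\sbullet\times W\sbullet$-invariant, hence to live on $\widetilde{\calV}^\mu/\!/(W\sbullet\times W\sbullet)$; and then the surjection $Z(\g_\C)\twoheadrightarrow\C[\widetilde{\calV}^\mu]^{W\sbullet\times W\sbullet}$ (restriction to an affine subvariety) closes the loop without ever examining the fibre of $\calR^\mu(\g)$ over a singular $\chi$. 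So the paper trades your missing local surjectivity at bad $\chi$ for a global upper bound on $Z(\calR^\mu)$ coming from $W$-invariance. Absent that idea, your proposal does not constitute a proof of part (1).
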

\begin{remark} Note that the ideal $I_\mu$ and ring $Z(\g)\otimes Z(\g)/I_\mu$ appear in \cite[Theorem 2.5]{bernstein-gelfand} without the connection to the center of Kostant algebra $\calR^\mu(\g)$.
\end{remark}

\begin{proof} Here we recall the ingredients of the proof, in our case of a complex Lie algebra $\g$, from \cite{muic-savin}. They \cite{muic-savin} prove the analogous results more generally for quasi-split real groups. We will use results of \cite{lepowsky-mccollum} and \cite{lepowsky} as discussed in \cite{dixmier} and \cite{wallach}, which were developed in the general real semisimple Lie group case, and applied in the case of complex semisimple Lie groups in \cite{duflo} and \cite{higson}.

Following \cite{duflo} we let $\g_\C:= \g\otimes\C = \g_1\oplus\g_2$ be the complexification of the Lie algebra $\g$, where $\g_1\cong\g_2\cong \g$. %We also denote by $\g:=(x,x)_{x\in\g}\subset \g_1\oplus \g_2$. 
Let \begin{align}\label{iwasawa}\g_\C=\k\oplus\a\oplus\n\end{align}
 be the Iwasawa decomposition. Here $\k\cong\g$ is the twisted diagonal copy of $$\k=(x,-{}^t x)_{x\in \g} \subset \g_1\oplus \g_2,$$ $$\a=(x, x)_{x\in\t}\subset \g_1\oplus\g_2,$$ is a maximal commutative subalgebra in the subspace $$\p=\a+\n=(x,{}^t x)_{x\in \g}<\g_1\oplus \g_2.$$ We also denote by $$\m=(x,-x)_{x\in \t}$$ the Cartan subalgebra of the twisted diagonal $\k$. We denoted $x\mapsto {}^t x$ a Chevalley anti-involution of $\g$, which is defined using a Chevalley basis $\{e_\beta,f_\beta,h_i\in \h: \beta\in \Delta^+, 1\leq i\leq r\}$ by ${}^t e_\beta=f_\beta$ and ${}^ t h_i=h_i$ with respect to a choice of $\h\subset \g$ Cartan subalgebra and set of positive roots. 
 
 In particular, $\a\cong \t\cong \m$ canonically. We can thus write $\t^*_\C\cong \t^*\oplus \t^*\cong \a^*\oplus \m^*$ in two different ways. We will either use coordinates $$(\xi,\psi)\in \t^*\oplus \t^*$$ or $$\nu\oplus \lambda \in \a^*\oplus \m^*.$$ Thus,   \begin{align}\label{twoway}(\xi,\psi)=\nu\oplus \lambda\end{align} means that $\xi+\psi=\nu$ and $\xi-\psi=\lambda$, and $\xi=(\nu+\lambda)/2$ and $\psi=(\nu-\lambda)/2$.

Then the decomposition \eqref{iwasawa} induces the {\em canonical mapping} \cite[9.2.2]{dixmier}  \begin{align*}
    U(\g_\C)\to U(\a)\otimes U(\k),\end{align*} which when restricted to the commutant $U(\g_1\oplus \g_2)^\k$ of  the twisted diagonal $U(\k)\subset U(\g_1\oplus \g_2)$ induces \cite[9.2.3.iii]{dixmier} an anti-homomorphism: \begin{align}\label{canonical} p: U(\g_1\oplus\g_2)^\k \to U(\a)\otimes U(\k)^{\m}.
\end{align} 
When followed by  our finite-dimensional irreducible representation $\pi^\mu:U(\k)\to \End(V^\mu)$ then \cite[Theorem 1.3]{lepowsky} identifies the kernel of \begin{align*} p_\mu:=\pi^\mu\circ p: U(\g_1\oplus\g_2)^\k \to U(\a)\otimes \End(V^\mu)^{\m}
\end{align*} as \begin{align} \label{kernel}\ker(p_\mu)=U(\g_\C)^\k \cap U(\g_\C) I_\mu,\end{align}  where $I_\mu=\ker(\pi^\mu)\triangleleft U(\k)\subset U(\g_\C)$ is the kernel of the representation $\pi^\mu:U(\k)\to \End(V^\mu)$. Thus we can describe the Hecke algebra of \cite[\S 3]{muic-savin} in our special complex Lie algebra case, following \cite{higson}, in the following \begin{lemma} \label{longhigson}\begin{align}\label{list} U(\g_\C)^\k/\ker(p_\mu) \nonumber &=U(\g_\C)^\k/(U(\g_\C)^\k \cap U(\g_\C) I_\mu)\\ %&=(U(\g_\C)/U(\g_\C) I_\mu))^\k \\ 
\nonumber & =(U(\g_\C)\otimes_{U(\k)}\End(V^\mu))^\k\\ \nonumber &=(U(\g_1)\otimes (\End(V^\mu))^{op})^\g\\ &=\calR^{\mu^{\!*}}(\g_1).\end{align}
\end{lemma}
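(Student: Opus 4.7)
The plan is to verify the four equalities in turn. The first is just the identification of $\ker(p_\mu)$ recorded in \eqref{kernel} via \cite[Theorem 1.3]{lepowsky}; nothing more needs to be done. For the second, I would use that $\End(V^\mu) \cong U(\k)/I_\mu$ as $U(\k)$-modules via the surjection $\pi^\mu$, giving
$$U(\g_\C) \otimes_{U(\k)} \End(V^\mu) \;\cong\; U(\g_\C) \otimes_{U(\k)} U(\k)/I_\mu \;\cong\; U(\g_\C)/U(\g_\C) I_\mu.$$
Since $I_\mu$ is $\ad(\k)$-stable (as the kernel of the $\k$-equivariant map $\pi^\mu$), so is the left ideal $U(\g_\C) I_\mu$, and this identification is $\k$-equivariant. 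Semisimplicity of $\k$ makes $U(\g_\C)$ and its quotients locally finite and completely reducible as $\k$-modules, so the functor of $\k$-invariants is exact on $0 \to U(\g_\C) I_\mu \to U(\g_\C) \to U(\g_\C)/U(\g_\C) I_\mu \to 0$, producing the desired presentation.

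For the third equality I would invoke the vector space decomposition $\g_\C = \g_1 \oplus \k$, which is immediate from the twisted-diagonal description of $\k$. The bracket identity $[(x,-{}^t x),(y,0)] = ([x,y],0)$ shows that $\g_1$ is an $\ad(\k)$-submodule, so by PBW this lifts to $U(\g_\C) \cong U(\g_1) \otimes U(\k)$ simultaneously as $\k$-modules (under the adjoint action) and as right $U(\k)$-modules. Tensoring over $U(\k)$ with $\End(V^\mu)$ then gives
$$U(\g_\C) \otimes_{U(\k)} \End(V^\mu) \;\cong\; U(\g_1) \otimes \End(V^\mu)$$
as $\k$-modules, and taking $\k$-invariants and transporting along the canonical Lie algebra isomorphism $\k \cong \g \cong \g_1$ produces $(U(\g_1) \otimes \End(V^\mu)^{op})^{\g_1}$. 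The opposite algebra structure enters at this step: the natural multiplication on $(U(\g_\C) \otimes_{U(\k)} \End(V^\mu))^\k$ is the one transported from $\End_{\g_\C}(U(\g_\C) \otimes_{U(\k)} V^\mu)$ via Frobenius reciprocity, and composition of such endomorphisms reverses the order of multiplication in the $\End(V^\mu)$ tensor factor.

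The fourth equality then follows from the canonical $\g$-algebra isomorphism $\End(V^\mu)^{op} \cong \End((V^\mu)^*) = \End(V^{\mu^*})$ given by the transpose, since $V^{\mu^*}$ is the dual representation; combined with the definition \eqref{def} this delivers $\calR^{\mu^*}(\g_1)$. I expect the main obstacle to be the third equality, specifically the bookkeeping that produces the opposite algebra structure: resolving it cleanly requires careful matching of the spherical-Hecke-algebra convention on the left-hand side (where $\End(V^\mu)$ acts on $V^\mu$ from the right inside the $\otimes_{U(\k)}$) against the tensor convention built into the definition of the Kostant algebra $\calR^{\mu^*}(\g_1)$ in \eqref{def}.
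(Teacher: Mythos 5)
Your proposal is correct and follows essentially the same route as the paper's proof: \eqref{kernel} for the first equality, the quotient presentation $U(\g_\C)\otimes_{U(\k)}\End(V^\mu)\cong U(\g_\C)/U(\g_\C)I_\mu$ together with exactness of $\k$-invariants for the second (the paper isolates the quotient isomorphism into a short lemma about $R/RI\cong R\otimes_A A/I$, same content, and leaves the semisimplicity of $\k$ implicit where you state it), the crossed-product/PBW decomposition $U(\g_\C)\cong U(\g_1)\otimes U(\k)$ for the third, and $\End(V^\mu)^{op}\cong\End(V^{\mu^*})$ for the fourth. The one real difference in presentation is the origin of the opposite algebra: where you derive the product via Frobenius reciprocity from $\End_{\g_\C}(U(\g_\C)\otimes_{U(\k)}V^\mu)$, the paper simply posits the explicit formula $(a_1\otimes b_1)(a_2\otimes b_2)=a_1a_2\otimes b_2b_1$ on the $\k$-commutant, verifies well-definedness there, and then defers the identification with $\calR^{\mu^*}(\g_1)$ --- together with the subtle observation that the resulting surjection from $U(\g_\C)^\k$ is \emph{not} the naive coordinate-wise $1\otimes\pi^{\mu^*}$ --- to \cite[Lemma~2.15]{higson}; the bookkeeping obstacle you flag is genuine but is exactly what that reference resolves.
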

\begin{proof}
In the first equation we used \eqref{kernel}. For the second first note the product \begin{align}\label{product} (a_1\otimes b_1) \cdot (a_2\otimes b_2)=a_1 a_2\otimes b_2 b_1\end{align} for $a_1\otimes b_1,a_2\otimes b_2\in U(\g_\C)\otimes_{U(\k) }\End(V^\mu)$ is well defined on $(U(\g_\C)\otimes_{U(\k)}\End(V^\mu))^\k$ the $U(\k)$-commutant part. Then because $\pi^\mu:U(\k)\twoheadrightarrow\End(V^\mu)$ is surjective so is the $U(\g_\C)$-module map \begin{align}\label{pitilde}\tilde{\pi}:U(\g_\C)\twoheadrightarrow U(\g_\C)\otimes_{U(\k)} \End(V^\mu)\end{align} and in turn we have the surjective ring homomorphism\begin{align*}\pi:U(\g_\C)^\k\twoheadrightarrow (U(\g_\C)\otimes_{U(\k)} \End(V^\mu))^\k.\end{align*} To compute the kernel of $\pi$ we note that the kernel of $\tilde{\pi}$ is $U(\g_\C)I_\mu$ by the following Lemma applied to the case $R=U(\g_\C)$, $A=U(\k)$ and $I=I_\mu$ (cf. also \cite[3.5.2.(1)]{wallach}). \begin{lemma}
     Let $I\triangleleft A\subset R$ be a left ideal in a subring of a unital ring. Then as left $R$-modules we have \begin{align*} R/RI\cong R\otimes_A A/I. 
    \end{align*}
\end{lemma} \begin{proof} First we can define $f:R/RI\to R\otimes_A A/I$ by $f(r+RI):= r\otimes 1$ and $g:R\otimes_A A/I\to R/RI$ by $g(r\otimes (a+I)):=ra+RI$. Then it is straightforward to check that $f$ and $g$ are well-defined and are inverses to each other. The result follows.
\end{proof} Now it follows that $\ker(\pi)=U(\g_\C)^\k\cap U(\g_\C)I_\mu$ which incidentally agrees with $U(\g_\C)^\k\cap U(\g_\C)I_\mu=U(\g_\C)^\k\cap I_\mu U(\g_\C)$ by \cite[9.1.10.(ii)]{dixmier}, thus is a two-sided ideal of $U(\g_\C)^\k$. This altogether implies the second equation of \eqref{list}. %In the second \cite[Lemma 2.15]{higson}. 

 In the third equation of \eqref{list},  we denoted $$\g=(x,x)\subset \g_1\oplus \g_2$$
the diagonal,  and we used the crossed product $U(\g_\C)=U(\g_1)\rtimes U(\k)$ following \cite[Proposition 2.13]{higson}. The opposite algebra appears because of the natural product structure on $(U(\g_\C)\otimes_{U(\k)}\End(V^\mu))^\k$  in \eqref{product}. In fact, the surjection $U(\g_\C)^\k\twoheadrightarrow \calR^{\mu^{\!*}}\!(\g)$ obtained this way is \cite[Lemma 2.15]{higson}, which is different from the coordinatewise one $$1\otimes \pi^{\mu^{\!*}}:U(\g_\C)^\g=(U(\g_1)\otimes U(\g_2))^\g\twoheadrightarrow (U(\g_1)\otimes \End(V^{\mu^{\!*}}))^\g.$$ Finally, in the fourth equation in \eqref{list} we used that the opposite matrix algebra $\End(V^\mu)^{op}\cong \End((V^\mu)^*)\cong \End(V^{\mu^*}),$ where $\mu^*$ the highest weight of the dual representation $(V^\mu)^*$. This completes the proof of Lemma~\ref{longhigson}.
\end{proof}

Thus, by \eqref{kernel} 
and \eqref{list} we get an injective anti-homomorphism:
\begin{align}\label{heckeanti} p_\mu: \calR^{\mu^{\!*}}\!(\g)\hookrightarrow U(\a)\otimes \End(V^\mu)^\m
\end{align}

We have $\End(V^\mu)^\m\cong \oplus_\lambda\End(V^\mu_\lambda)$, where $V^\mu_\lambda$ denotes the $\h$-weight space  $\lambda\in \Lambda(\g)$ of the representation $V^\mu$. We denote $\dim(V^\mu_\lambda)=m^\mu_\lambda$. As $U(\a)\cong S^*(\a)$ canonically, for each $\nu\in \a^*\cong \t^*$ and $\lambda\in \Lambda\subset \t^*$
such that $m^{\mu}_\lambda\neq 0$ we have a representation of $\calR^{\mu^{\!*}}\!(\g)$ on $(V^\mu_\lambda)^*$ denoted \begin{align}\label{rho}\rho_{\nu,\lambda}:=p_\mu\otimes \C_\nu:\calR^{\mu^{\!*}}\!(\g)\to \End((V^\mu_\lambda)^*).\end{align} 

Note that if $X$ is a Harish-Chandra $(\g_\C,\k)$-module, then the finite-dimensional $\Hom_\g(V^\mu,X)$ is an $\calR^{\mu^{\!*}}(\g_1)$-module. Moreover, the map \begin{align}\label{correspondence}X\mapsto \Hom_\g(V^\mu,X)\end{align}establishes (see \cite[Theorem 5.5]{lepowsky-mccollum} or a more recent discussion \cite[Proposition 2.10]{higson}) a bijection between the set of irreducible Harish-Chandra $(\g_\C,\k)$-modules $X$ with $$\Hom_\g(V^\mu,X)\neq 0$$ and irreducible representations of $\calR^{\mu^{\!*}}\!(\g)$.  
\begin{proposition} \label{prop} For $\nu\in \a^*\cong \h^*$ and $\lambda\in \Lambda\subset \m^*\cong\h^*$ let $(\xi,\psi)=\nu\oplus\lambda$ as in \eqref{twoway}. Then  the representations $\rho_{\xi,\psi}:=\rho_{\nu,\lambda}$ in \eqref{rho} satisfy the following properties. When $\xi$ is $\sbullet$-dominant, that is $\langle \xi+\rho, \alpha\rangle$ is not a negative integer for any $\alpha\in \Delta^+$ positive root, and $\psi$ is $\sbullet$-antidominant, that is $$-\sbullet \psi:=-\psi-2\rho$$ is $\sbullet$-dominant, we have \begin{enumerate} \item  \label{first}$\rho_{\xi,\psi}$ is irreducible.
\item \label{second} For $w\in W$ we have $\rho_{w\sbullet \xi,w\sbullet\psi}\cong\rho_{\xi,\psi}.$

\item \label{third}  The irreducible  $\rho_{\xi,\psi}$ is a subquotient of $\rho_{\xi,w\sbullet\psi}$ for any $w\in W$ such that $w\sbullet\psi-\psi \in \Lambda_r\subset \Lambda$ is in the root lattice.
\end{enumerate}
\end{proposition}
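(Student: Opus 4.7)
The plan is to translate each assertion into a statement about a principal series $(\g_\C,\k)$-module $X_{\nu,\lambda}$ via the Lepowsky--McCollum correspondence \eqref{correspondence}, and then invoke the classical theory of principal series for complex semisimple groups from \cite{duflo,muic-savin,bernstein-gelfand}. The starting point is the identification of $\rho_{\xi,\psi}$ with the Hecke module $\Hom_\g(V^\mu, X_{\nu,\lambda})$, where $X_{\nu,\lambda}$ is the principal series attached to the parameters $(\nu,\lambda)$. By Frobenius reciprocity this Hom space is canonically $(V^\mu_\lambda)^*$, and unwinding the canonical map \eqref{canonical} shows that the induced $\calR^{\mu^{\!*}}(\g)$-action agrees with the one in \eqref{rho}.

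For \eqref{first}, Zhelobenko's irreducibility criterion for complex groups (see \cite[Ch.~III]{duflo}) asserts that $X_{\nu,\lambda}$ is irreducible iff for every $\alpha\in\Delta^+$, the pairing $\langle\xi+\rho,\alpha^\vee\rangle$ is not a negative integer and $\langle\psi+\rho,\alpha^\vee\rangle$ is not a positive integer. The second of these is equivalent to $-\sbullet\psi$ being $\sbullet$-dominant, so the assumptions of \eqref{first} match Zhelobenko's conditions exactly, and irreducibility of $X_{\nu,\lambda}$ transfers to irreducibility of $\rho_{\xi,\psi}$ through \eqref{correspondence}. For \eqref{second}, under the same hypotheses the Knapp--Stein intertwining operators $X_{\nu,\lambda}\to X_{w\nu,w\lambda}$ are isomorphisms for every $w\in W$, since their only possible poles lie on the reducibility locus excluded above. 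The Harish-Chandra isomorphism $Z(\g)\cong S(\h)^{W\sbullet}$ forces $W$ to act on the ambient $\Spec(Z(\g))\times\Spec(Z(\g))\cong\t^*/W\sbullet\times\t^*/W\sbullet$ by the $\sbullet$-action, and transferring the Knapp--Stein isomorphism through \eqref{correspondence} yields $\rho_{w\sbullet\xi,w\sbullet\psi}\cong\rho_{\xi,\psi}$.

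For \eqref{third}, the hypothesis $w\sbullet\psi-\psi\in\Lambda_r$ guarantees that $\lambda':=\xi-w\sbullet\psi$ and $\lambda=\xi-\psi$ differ by a root-lattice element, hence lie in a common $\Lambda_r$-coset of weights of $V^\mu$, and it simultaneously ensures that $X_{\xi,w\sbullet\psi}$ and $X_{\xi,\psi}$ share the same pair of infinitesimal characters on $\g_1\oplus\g_2$. The Bernstein--Gelfand composition series analysis \cite[Theorem 2.5]{bernstein-gelfand} then exhibits the irreducible constituent with parameters $(\xi,\psi)$ as a subquotient of $X_{\xi,w\sbullet\psi}$; applying the exact-enough functor $\Hom_\g(V^\mu,-)$ transfers this into the desired subquotient relation $\rho_{\xi,\psi}\hookrightarrow\rho_{\xi,w\sbullet\psi}$ on the Hecke-algebra side.

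The main technical obstacle lies in this final step of \eqref{third}: one must carefully verify that $\Hom_\g(V^\mu,-)$ respects the relevant part of the principal-series subquotient structure (in particular, that the constituent singled out by the Bernstein--Gelfand analysis genuinely contributes a nonzero $V^\mu$-isotypic component), which is the core of the Muic--Savin analysis being imported here. Once the dictionary of the first paragraph is in place, the remaining parts \eqref{first} and \eqref{second} reduce to standard complex-group results.
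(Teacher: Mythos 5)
Your overall strategy — identify $\rho_{\xi,\psi}$ with $\Hom_\g(V^\mu,X(\xi,\psi))$ via the Lepowsky--McCollum correspondence and then import known facts about complex principal series — is precisely what the paper does. The issues are in the individual parts.

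For \eqref{first}, you assert that the hypothesis ($\xi$ $\sbullet$-dominant, $\psi$ $\sbullet$-antidominant) ``matches Zhelobenko's conditions exactly,'' but it does not: this is a normalization (sufficient) condition, not the full irreducibility criterion. There are plenty of irreducible $X(\xi,\psi)$ whose parameters fail this dominance condition; the point is only that under the stated normalization irreducibility holds, which is what \cite[Theorem~4.4]{duflo} (the paper's reference) gives directly. Presenting it as an ``iff'' is incorrect and could lead the reader astray. For \eqref{second}, the Knapp--Stein approach is more machinery than needed and, as stated, has a gap: it is not true that the only poles of the (unnormalized) Knapp--Stein operators lie on the reducibility locus — they can have poles at irreducibility points, and one would have to normalize the operators and argue they are nonvanishing there. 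The paper's argument is shorter and cleaner: by \cite[Lemme~5.5]{duflo} the principal series $X(\xi,\psi)$ and $X(w\sbullet\xi,w\sbullet\psi)$ have the same semisimplification, and since $X(\xi,\psi)$ is irreducible by part \eqref{first}, they are isomorphic.

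For \eqref{third} you openly flag the ``main technical obstacle,'' and that flag is justified: the proposal is missing the key idea the paper uses, namely the Bernstein--Gelfand equivalence of categories \cite[\S6]{bernstein-gelfand} (cf.\ \cite[\S6]{jantzen}), which for $\psi$ $\sbullet$-dominant translates statements about the principal series $X(\xi,\psi)=\Hom_{fin}(M_\psi,M_\xi^\vee)$ into statements about the dual Verma module $M_\xi^\vee$ inside category $\mathcal O$, where the subquotient relation is exactly \cite[Theorem~6.7]{bernstein-gelfand}. Instead you cite \cite[Theorem~2.5]{bernstein-gelfand}, which is the ideal-theoretic statement about $Z(\g)\otimes Z(\g)$ (the paper's Remark after Theorem~\ref{explicit} uses it in that role), not a composition-series result, so the citation does not support the claim. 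You also write $\rho_{\xi,\psi}\hookrightarrow\rho_{\xi,w\sbullet\psi}$, which asserts a subobject; the statement requires only a subquotient, and the hook-arrow is generally false. In short: parts \eqref{first} and \eqref{second} need their justifications corrected to match what is actually being used, and part \eqref{third} is missing the decisive step (the category-$\mathcal O$ translation) that makes the argument go through.
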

\begin{proof} Note that by \cite[3.5.8]{wallach} the principal series Harish-Chandra $(\g_\C,\k)$-module (cf. \cite[\S 6]{jantzen})  \begin{align}\label{principal}X(\xi,\psi):=\Hom_{fin}(M_{\psi},M^\vee_\xi)\end{align} corresponds to $\rho_{\xi,\psi}$ under the map \eqref{correspondence}. Here $M_{\xi}$ is the Verma module of highest weight $\xi\in \t^*$, $M^\vee_{\psi}$ is the dual Verma module (cf. \cite[\S 3.2]{humphreys}) and $\Hom_{fin}$ is the $\k$-finite part of $\Hom$ (cf. \cite[18.1]{etingof}). 

Now \ref{first}.  follows from  \cite[Theorem 4.4]{duflo}. \ref{second}. follows from \ref{first}. and  \cite[Lemme 5.5]{duflo}. Finally, \ref{third}. (and also \ref{first}.) can be seen  from the equivalence of categories in \cite[\S 6]{bernstein-gelfand}, (cf. \cite[\S 6]{jantzen}), which for $\psi$ $\sbullet$-antidominant translates the statements from $X(\xi,\psi)$ to the corresponding statements about $M_{\xi}^\vee$. Ultimately, it follows from \cite[Theorem 6.7]{bernstein-gelfand}.
\end{proof}

Now we observe that \begin{align}\label{centerconst} p_\mu(Z(\calR^{\mu^{\!*}}\!(\g)))\subset Z(U(\a)\otimes \End(V^\mu)^\t)\cong U(\a)\otimes \bigoplus_{\lambda\in S^\mu} Z(\End((V^\mu_\lambda)^*)).\end{align} It is because $\rho_{\nu,\lambda}$ is irreducible for a Zariski dense set of $\nu$'s  by Proposition~\ref{prop}. Thus, the center $Z(\calR^{\mu^{\!*}}\!(\g))_\nu\subset Z(\calR^{\mu^{\!*}}_\nu\!(\g))$ will act with a scalar for a Zariski dense subset of $\nu$'s in $\a^*$. As $Z(\End((V^\mu_\lambda)^*))\subset \End((V^\mu_\lambda)^*)$ is a closed subset we get \eqref{centerconst}.

Now the subset $$\calV^\mu:= \{(\psi+\lambda,\psi)| \psi\in \t^*, \lambda\in S^\mu\}\subset \t^*\times \t^*$$ is a union of affine hyperplanes and thus is an affine subvariety. It also parametrizes the principal series representations
$X(\psi+\lambda,\psi)$  such that $\Hom_\g(V^\mu,X(\psi+\lambda,\psi))\neq 0$. In turn, we can identify $\calV^\mu\cong \Spec( U(\a)\otimes \bigoplus_{\lambda\in S^\mu} Z(\End((V^\mu_\lambda)^*)))$ in the obvious way, and see that the map $p_\mu(z)$ is just the function on $\calV^\mu$ given by assigning to the principal series $X(\psi+\lambda,\psi)$ the scalar, with which the central element $z\in Z(\calR^{\mu^{\!*}})$ acts on it. By Proposition~\ref{prop}.\ref{second}, this function is invariant under the diagonal $W\sbullet$-action. By Proposition~\ref{prop}.\ref{third}, $p_\mu(z)$ is invariant under the $W\sbullet$-action on the second coordinate of  $\t^*\times \t^*$ in the sense that it takes the same value on $(\xi,\psi)$ and $(\xi,w\sbullet \psi)$ as long as both $(\xi,\psi)\in \calV^\mu$ and $(\xi,w\sbullet \psi)\in \calV^\mu$. This means that we can extend our function on $\calV^\mu$ uniquely to a $W\sbullet\times W\sbullet$-invariant function  on $$\widetilde{\calV}^\mu:=(1\times W\sbullet)(\calV^\mu)=\{(\psi+\lambda,w\sbullet \psi)| \psi\in \t^*, \lambda\in S^\mu, w\in W\}\subset \t^*\times \t^*$$
Thus we see that $$p_\mu:Z(\calR^{\mu^{\!*}})\hookrightarrow \C[\widetilde{\calV}^\mu]^{W\sbullet \times W\sbullet}.$$
To show surjection we note that $$\widetilde{\calV}^\mu/\!/ (W\sbullet \times\, W\sbullet )\subset \t^*/\!/ W\sbullet\times\, \t^*/\!/ W\sbullet \cong \Spec(Z(\g)\otimes Z(\g))\cong \Spec(Z(\g_\C))$$ is an affine subvariety, because $\widetilde{\calV}^\mu$
 is a closed $W\sbullet \times\, W\sbullet$-stable  finite union of affine subspaces.  Thus the restriction map \begin{align}\label{surj} Z(\g_\C)\twoheadrightarrow \C[\widetilde{\calV}^\mu]^{W\sbullet \times W\sbullet} \end{align} is surjective. We can conclude the proof of Theorem~\ref{explicit} by noting the restriction map \eqref{surj} factors through the map $Z(\g_\C)\subset Z(U(\g_\C)^\k)\to Z(\calR^{\mu^{\!*}}\!(\g))$. \end{proof}
\begin{corollary}
Let $f$ denote the map on centers of the surjective map $U(\g_\C)^\k \twoheadrightarrow \calR^{\mu^*}(\g)$ in \eqref{list} as $f:=Z(\g_\C)\hookrightarrow Z(U(\g_\C)^\k) \to Z(\calR^{\mu^*}(\g))$. Writing \begin{align*} f:Z(\g_\C)=Z(\g_1\oplus\g_2) = Z(\g_1)\otimes Z(\g_2)\to \calZ^{\mu^{\!*}}\!(\g).\end{align*} we have that  $z_1\otimes z_2\in Z(\g_1)\otimes Z(\g_2)$ maps to \begin{align*}
    f(z_1\otimes z_2)=z_1\delta(z_2)\in \calR^{\mu^{\!*}}\!(\g_1).\end{align*}  In particular, $f:Z(\g_\C) \twoheadrightarrow \calZ^{\mu^*}(\g)$ is surjective. 
    \end{corollary} 
    \begin{proof}
Let
$$
q:U(\g_\C)^\k \twoheadrightarrow
\left(U(\g_\C)\otimes_{U(\k)}\End(V^\mu)\right)^\k
\cong \calR^{\mu^{\!*}}(\g_1)
$$
be the quotient map appearing in \eqref{list}. We compute the restriction of
$q$ to the two central factors
$$
Z(\g_\C)=Z(\g_1\oplus\g_2)=Z(\g_1)\otimes Z(\g_2).
$$
Identifying $\g_1,\g_2$ with $\g$, and identifying
$\k$ with $\g$ by
$x\longmapsto (x,-{}^t x),$
we have, for $y\in \g_2\simeq \g$,
$$
(0,y)=({}^t y,0)+(-{}^t y,y),
$$
where $(-{}^t y,y)\in \k$. Hence we have
$$
q(0,y) = {}^t y\otimes 1+1\otimes \pi^\mu(-{}^t y)
\in U(\g_1)\otimes \End(V^\mu)^{\opp},
$$
under the  identification
$$
\End(V^\mu)^{\opp}\cong \End((V^\mu)^*)=\End(V^{\mu^{\!*}}),
$$ where
an endomorphism \(A\in \End(V^\mu)\) acts on \((V^\mu)^*\) by
\(\varphi\mapsto \varphi\circ A\). Therefore
$\pi^\mu(-{}^t y)$ gets identified with $\pi^{\mu^{\!*}}({}^t y),$
because the dual infinitesimal action is given by
$$\pi^{\mu^{\!*}}(x)\varphi=-\varphi\circ \pi^\mu(x).$$
Consequently,
$ q(0,y) = {}^t y\otimes 1+1\otimes \pi^{\mu^{\!*}}({}^t y)=\delta({}^t y)
\in \calR^{\mu^{\!*}}(\g_1).$

It follows that, for $z_2\in Z(\g_2)$,
$$ q(z_2)=\delta({}^t z_2)=\delta(z_2), $$
because the Chevalley anti-involution fixes the center $Z(\g)$.
For $z_1\in Z(\g_1)$, the quotient map gives 
$ q(z_1)=z_1\otimes 1. $
Thus we can conclude
$$
f(z_1\otimes z_2) = q(z_1)q(z_2) = z_1\delta(z_2) \in \calR^{\mu^{\!*}}(\g_1).$$
Finally by Theorem~\ref{explicit} $\calZ^{\mu^{\!*}}(\g)$ is generated over $Z(\g)$ by
$\delta(Z(\g))$  thus the map
$ f:Z(\g_\C)\twoheadrightarrow \calZ^{\mu^{\!*}}(\g) $
is surjective.
\end{proof}
\begin{remark}\label{harishchandra}  The proof of Theorem~\ref{explicit} above is the straightforward application of the usual strategy \cite[Theorem 7.4.5]{dixmier} originally employed for the Harish-Chandra isomorphism: $Z(\g)\cong S(\t)^{W\bullet}$. Namely, there we proceed by  considering the projection $U(\g)\to U(\h)$ induced by the triangular decomposition $\g=\overline{{\mathfrak n}}+\t+{\mathfrak n}$. Analogously, in the proof above, we had the canonical mapping \eqref{canonical} induced from the Iwasawa decomposition \eqref{iwasawa}. Next, one observes that the restriction $Z(\g)\hookrightarrow U(\h)\cong S(\h)$ is an embedding, and identifies the image using the linkage principle for Verma modules. Similarly we had the embedding \eqref{centerconst}, and we identified the image by understanding the linkage for principal series representations in Proposition~\ref{prop}.\end{remark}
\begin{remark} Note that if $\mu,\lambda\in \Lambda^+$ such that $\mu-\lambda$ is a non-negative linear combination of positive roots then $S^\lambda\subset S^\mu$ thus $\widetilde{\calV}^\lambda \subset \widetilde{\calV}^\mu$ and so we have canonical surjections: $\calZ^\mu(\g)\twoheadrightarrow \calZ^\lambda(\g)$. We denote $$\tilde{\calV}:=\{(\psi,\xi)| \psi-w\xi\in \Lambda \mbox{ for some } w\in W \}\subset \t^*\times \t^*$$ the ind-scheme $\widetilde{\calV}=\varinjlim \widetilde{\calV}^\mu$. In light of Theorem~\ref{explicit} we have the following corollary, which  is \cite[Theorem 2]{muic-savin} specialised to the complex Lie algebra case. 
\end{remark}
\begin{corollary} \label{center} The center $Z(\g_\C,\k)$ of the category of Harish-Chandra $(\g_\C,\k)$-modules is 
$$Z(\g_\C,\k)\cong \varprojlim \calZ^\mu(\g) \cong \C[\widetilde{\calV}]^{W\sbullet\times W\sbullet}.$$
\end{corollary}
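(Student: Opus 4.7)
The plan is to split the corollary into its two isomorphisms. The first, $Z(\g_\C,\k) \cong \varprojlim \calZ^\mu(\g)$, is essentially [Muić--Savin, Theorem 2] adapted to the complex Lie algebra case and relies on the Lepowsky--McCollum correspondence recalled in \eqref{correspondence}; the second, $\varprojlim \calZ^\mu(\g) \cong \C[\widetilde{\calV}]^{W\sbullet \times W\sbullet}$, follows formally from Theorem~\ref{explicit} together with the description $\widetilde{\calV} = \varinjlim \widetilde{\calV}^\mu$ in the preceding remark.

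For the first isomorphism, the key input is the correspondence \eqref{correspondence}. Any $z \in Z(\g_\C,\k)$ acts on each Harish-Chandra module $X$ functorially, hence acts on $\Hom_\g(V^\mu, X)$ for every $\mu \in \Lambda^+$; naturality forces this action to commute with the $\calR^{\mu^{\!*}}(\g)$-action (which itself arises from functoriality in $X$), producing an element of $Z(\calR^{\mu^{\!*}}(\g)) = \calZ^{\mu^{\!*}}(\g)$. Since $\mu \mapsto \mu^{\!*}$ permutes $\Lambda^+$, this yields a system indexed by all of $\Lambda^+$. Compatibility with the surjections $\calZ^\mu \twoheadrightarrow \calZ^\lambda$ from the preceding remark is automatic from functoriality, giving a map $Z(\g_\C,\k) \to \varprojlim \calZ^\mu(\g)$. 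Its inverse sends a compatible system $(z_\mu)$ to the natural transformation whose value on $X$ is determined component-wise by the $z_\mu$'s acting on each $\Hom_\g(V^\mu, X)$, as in [Muić--Savin, Theorem 2].

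For the second isomorphism, Theorem~\ref{explicit}.(2) identifies $\calZ^\mu(\g)$ with $\C[\widetilde{\calV}^\mu]^{W\sbullet \times W\sbullet}$ (the coordinate ring of the reduced subscheme cut out by $I_\mu$ inside $\t^*/\!/W\sbullet \times \t^*/\!/W\sbullet$). The transition maps $\calZ^\mu \twoheadrightarrow \calZ^\lambda$ for $\lambda \leq \mu$ in the dominance order correspond to restriction along the closed embeddings $\widetilde{\calV}^\lambda \hookrightarrow \widetilde{\calV}^\mu$. Passing to the inverse limit, using $\widetilde{\calV} = \varinjlim \widetilde{\calV}^\mu$ and the exactness of $(\cdot)^{W\sbullet \times W\sbullet}$ for the finite group $W\sbullet \times W\sbullet$, gives
\[
\varprojlim_\mu \calZ^\mu(\g) \;\cong\; \varprojlim_\mu \C[\widetilde{\calV}^\mu]^{W\sbullet \times W\sbullet} \;\cong\; \C[\widetilde{\calV}]^{W\sbullet \times W\sbullet}.
\]

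The main obstacle is the rigorous verification of the first isomorphism, not its formal bookkeeping: one needs that every irreducible Harish-Chandra module is captured by $\Hom_\g(V^\mu,-)$ for some $\mu$ (to get injectivity), and that a compatible system of $z_\mu$'s glues to a genuine natural endomorphism of the identity functor on the whole category (to get surjectivity). Both statements are established in the more general quasi-split real setting by Muić--Savin and specialise verbatim to our complex Lie algebra case, so we can quote their result and combine it with Theorem~\ref{explicit} to conclude.
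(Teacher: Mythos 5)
Your proposal is correct and follows essentially the same route as the paper: the paper itself treats the corollary as an immediate consequence of Theorem~\ref{explicit} together with \cite[Theorem 2]{muic-savin}, using the inverse system $\calZ^\mu(\g)\twoheadrightarrow\calZ^\lambda(\g)$ and the ind-scheme $\widetilde{\calV}=\varinjlim\widetilde{\calV}^\mu$ set up in the preceding remark, exactly as you do. Your unpacking of the first isomorphism via the Lepowsky--McCollum correspondence \eqref{correspondence} and of the second via passage to the inverse limit (where commuting $(\cdot)^{W\sbullet\times W\sbullet}$ with $\varprojlim$ needs only that invariants are a limit, not full exactness) is a faithful elaboration of what the paper leaves implicit.
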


\begin{remark} For a $W$-orbit   $[\mu_i]:=W\mu_i\in S^\mu/W$ we have an irreducible component  $\Spec(\calZ^\mu(\g))_{[\mu_i]}$ of $\Spec(\calZ^\mu(\g))$ given by \begin{align}\label{component} 
 \Spec(\calZ^\mu(\g))_{[\mu_i]}(\C)= \{ ([\lambda]_{\sbullet},[\lambda+\mu_i]_{\sbullet}) \mid \lambda\in \t^*\} \end{align} which can be identified with $\t/\!/ W_{\mu_i}\sbullet$, which in turn can be identified with $\Spec(H^*_{\G^\vee}(\G^\vee/P^\vee_{\mu_i}))$. In \cite{hausel-loewit} this will be related to the fixed point set of the loop rotation on $\Gr^\mu_{\G^\vee}$ being $\coprod_{[\mu_i]\in S^\mu/W} \G^\vee/P^\vee_{\mu_i}$ motivated by Nakajima's observation \cite[Theorem 3.1]{hausel-big} relating $\calZ^\mu(\g)$ to the equivariant cohomology of $\Gr^\mu_{\G^\vee}$.
\end{remark}

\begin{corollary}\label{symmetry} The Cartan  involution $\theta:\g_\C\to \g_\C$ sending $(x,y)$ to $(-{}^t y,-{}^t x)$ induces \begin{align}\label{involution} \Phi:([\lambda]_{\sbullet},[\mu]_{\sbullet})\mapsto ([-\sbullet\mu]_{\sbullet},[-\sbullet\lambda]_{\sbullet})=([-\mu-2\rho]_{\sbullet},[-\lambda-2\rho]_{\sbullet})\end{align} on $\Spec(Z(\g))\times \Spec(Z(\g))$, which in turn defines an involution on $\calZ^\mu(\g)$ swapping the coefficients $Z(\g)$ with the corresponding generators $\delta(Z(\g))$. 
\end{corollary}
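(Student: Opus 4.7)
The plan proceeds in three stages: (i) verify that $\theta$ is a Lie algebra involution of $\g_\C$; (ii) compute the induced map $\Phi$ on $\Spec(Z(\g))\times\Spec(Z(\g))$ via the Harish--Chandra isomorphism; and (iii) check that $\Phi$ preserves the closed subscheme $\Spec(\calZ^\mu(\g))$ described in Theorem~\ref{explicit}, so that it descends to the claimed involution on $\calZ^\mu(\g)$.

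For (i), $\omega:=-{}^t$ is the Chevalley involution of $\g$: a Lie algebra automorphism of order two, the minus sign converting the anti-involution ${}^t$ into a genuine automorphism. Then $\theta(x,y)=(\omega(y),\omega(x))$ is the composition of the flip of the two summands of $\g_\C=\g_1\oplus\g_2$ with $\omega$ applied to each factor---both Lie algebra automorphisms---so $\theta^2=\id$ is immediate. I extend $\theta$ to $U(\g_\C)=U(\g_1)\otimes U(\g_2)$ via $u_1\otimes u_2\mapsto \omega(u_2)\otimes\omega(u_1)$; it restricts to an involution of $Z(\g_\C)=Z(\g_1)\otimes Z(\g_2)$, inducing $\Phi$ on spectra.

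The crux is to show $\omega^*$ acts on $\Spec(Z(\g))=\t^*/\!/W\sbullet$ as $[\lambda]_\sbullet\mapsto [-\sbullet\lambda]_\sbullet$. For dominant integral $\lambda$, the twisted module $(V^\lambda)^\omega$ (same underlying space, action precomposed with $\omega$) has all weights negated, so its new highest weight is $-w_0\lambda$, yielding $(V^\lambda)^\omega\cong V^{-w_0\lambda}$ and thus $\chi_\lambda\circ\omega=\chi_{-w_0\lambda}$. A short calculation using $w_0\rho=-\rho$ shows $w_0\sbullet(-\sbullet\lambda)=-w_0\lambda$, hence $[-w_0\lambda]_\sbullet=[-\sbullet\lambda]_\sbullet$. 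Since $\omega^*$ is an algebraic self-map and the dominant integral weights are Zariski dense in $\t^*$, the identification extends to all $\lambda$. Combined with the swap built into $\theta$, one reads off $\Phi([\lambda]_\sbullet,[\mu]_\sbullet)=([-\sbullet\mu]_\sbullet,[-\sbullet\lambda]_\sbullet)$.

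For (iii), by Theorem~\ref{explicit} the closed points of $\Spec(\calZ^\mu(\g))$ are $\{([\lambda]_\sbullet,[\lambda+\mu_i]_\sbullet):\lambda\in\t^*,\,\mu_i\in S^\mu\}$. Under $\Phi$ such a point maps to $([-\sbullet(\lambda+\mu_i)]_\sbullet,[-\sbullet\lambda]_\sbullet)$; setting $\lambda':=-\lambda-\mu_i-2\rho$ one computes $\lambda'+\mu_i=-\sbullet\lambda$, so the image is still of the required form with the same $\mu_i\in S^\mu$. Hence $\Phi$ preserves the defining ideal $I_\mu$ and descends to an involution of $\calZ^\mu(\g)=Z(\g)\otimes Z(\g)/I_\mu$; and since $\theta(z_1\otimes z_2)=\omega(z_2)\otimes\omega(z_1)$ literally exchanges the two tensor factors (twisted by $\omega$), this is exactly the swap of the coefficient copy $Z(\g)$ with the generator copy $\delta(Z(\g))$. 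The step I expect to require the most care is the identification $\omega^*[\lambda]_\sbullet=[-\sbullet\lambda]_\sbullet$; the Zariski-density route above is clean, but one could alternatively derive it from the fact that $\omega$ exchanges the Harish--Chandra projections onto $U(\t)$ coming from the standard and opposite Borel subalgebras.
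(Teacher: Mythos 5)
Your proof is correct, and it supplies exactly the details that the paper leaves implicit (the paper states Corollary~\ref{symmetry} without an explicit proof, as a consequence of Theorem~\ref{explicit}). The three steps are all sound: $\theta$ is indeed a Lie algebra involution with $\theta|_{\k}=\id$, so it descends to the Kostant/Hecke quotient; the identification $\omega^*[\lambda]_{\sbullet}=[-w_0\lambda]_{\sbullet}=[-\sbullet\lambda]_{\sbullet}$ via twisted highest-weight modules (using $(V^\lambda)^\omega\cong V^{-w_0\lambda}$ and $w_0\sbullet(-\sbullet\lambda)=-w_0\lambda$) is correct, and Zariski density is a clean way to pass from dominant integral $\lambda$ to all of $\t^*$; and the verification that $\Phi$ maps $([\lambda]_\sbullet,[\lambda+\mu_i]_\sbullet)$ to $([\lambda']_\sbullet,[\lambda'+\mu_i]_\sbullet)$ with $\lambda'=-\sbullet(\lambda+\mu_i)$ is exactly what is needed to see, via Theorem~\ref{explicit}, that the defining ideal $I_\mu$ is preserved, so that $\Phi$ descends to $\calZ^\mu(\g)=Z(\g)\otimes Z(\g)/I_\mu$ and swaps the two tensor factors (coefficient copy versus $\delta$-image) as claimed.
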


\begin{remark} When $-1\in W$ (i.e. in types $A_1$,$B$,$C$,$D_{2n}$, $G_2$,$F_4$, $E_7$ and $E_8$) then $-\sbullet\in W\sbullet$ and so the symmetry \eqref{involution} is just $([\lambda]_{\sbullet},[\mu]_{\sbullet})\mapsto ([\mu]_{\sbullet}, [\lambda]_{\sbullet})$.
\end{remark}

\begin{remark} We note that $\Phi$ preserves any  component \eqref{component}. Where it induces the involution $$\begin{array}{cccc} \Phi_{\mu_i}:&\C[\t^*]^{W_{\mu_i}\sbullet}&\rightarrow&\C[\t^*]^{W_{\mu_i}\sbullet} \\ &f(\lambda)&\mapsto &f(-\sbullet(\lambda+\mu_i))\end{array}, $$ which swaps the coefficients $\C[\t^*]^{W\sbullet}\cong Z(\g)$ with the generators $\C[\t^*]^{W+\mu_i\sbullet}=\delta(Z(\g))\subset \C[\t^*]^{W_{\mu_i}\sbullet}$, where $W\!+\mu_i\sbullet$ denotes the $w\in W$ action on $\h^*$ given by $\lambda\mapsto w(\lambda+\mu_i+\rho)-\mu_i-\rho$ with unique fixed point at $-\rho-\mu_i$.
\end{remark}

\begin{remark} For  a self-dual $\mu^{\!*}=\mu$  we have $S^\mu=-S^\mu$ and then $-\sbullet$  acts on $\Spec(\calZ^\mu(\g))$ by $$-\sbullet:([\lambda]_{\sbullet},[\mu]_{\sbullet})\mapsto ([-\sbullet\lambda]_{\sbullet},[-\sbullet\mu]_{\sbullet}).$$
It is induced by the principal anti-involution \cite[2.2.18]{dixmier} on $U(\g_\C)$ extending $x\mapsto -x$ for $x\in \g_\C$. Thus $$-\sbullet\Phi: ([\lambda]_{\sbullet},[\mu]_{\sbullet})\mapsto ([\lambda]_{\sbullet},[\mu]_{\sbullet})$$ also acts on $\Spec(\calZ^\mu(\g))$. Together  $\{1,\Phi,-\sbullet,-\sbullet\Phi\}$  form a Klein group action on $\calZ^\mu(\g)$.
\label{klein}
\end{remark}

\section{Associated graded}
\label{graded}
Independently of \cite{kostant} Kirillov \cite{kirillov0}  reintroduced the algebras $\calR^\mu(\g)$ under the name {\em quantum family algebra}. He also introduced their classical counterpart $\calC^\mu(\g):=(S^*(\g)\otimes \End(V^\mu))^\g$ an associated, graded $S^*(\g)^\g$-algebra, which he called {\em classical family algebra}. For shortness, we will call $\calR^\mu(\g)$ {\em Kostant algebra} and $\calC^\mu(\g)$ {\em Kirillov algebra}.

The associated graded of the standard filtration $U(\g)=\cup_{p=0}^\infty U_p(\g)$ satisfies $\overline{U(\g)}\cong S^*(\g)$.  The standard filtration is $\g$-invariant, thus it induces a filtration  on $\calR^\mu(\g)$ which satisfies 
\begin{align}\label{assgr}\overline{\calR^\mu(\g)}\cong {\calC}^\mu(\g).\end{align}  We recall the symmetrization map \begin{align}\label{sym}\omega:S^*(\g)\stackrel{\cong}{\to} U(\g)\end{align} and the standard vector space grading $U(\g)=\oplus_{p=0}^\infty U^p(\g)$ induced by $\omega$ and the natural grading on $S^*(\g)$. The following lemma describes how the diagonal map $\delta$ in \eqref{delta} works on the associated graded. 

\begin{lemma}\label{lemma} If $f\in S^p(\g)$ then write $$\delta(\omega(f))=(\delta^q(\omega(f)))_{q=0}^p \in \bigoplus_{q=0}^p (U^q(\g)\otimes \End(V^\mu)).$$ %We have $\overline{\delta^q(x)}=D^{p-q}(\omega^{-1}(x))/(p-q)!$. 
Then we have \begin{align}\label{leading}\delta^p(\omega(f))=\omega(f)\otimes 1\in U(\g)\otimes \End(V^\mu), \end{align} \begin{align}\label{linear}\overline{\delta^{p-1}(\omega(f))}=\pi^\mu(d(f))\in S^*(\g)\otimes \End(V^\mu),\end{align} where $d(f)\in S^*(\g)\otimes (\g^*)^*\cong S^*(\g)\otimes \g$ denotes the exterior derivative of the polynomial function $f:\g^*\to \C$. 
\end{lemma}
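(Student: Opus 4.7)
The plan is to verify both identities on symmetrized monomials $\omega(x_1\cdots x_p)$ with $x_i\in\g$, since such elements span $\omega(S^p(\g))$ by linearity. Using that $\delta$ is an algebra homomorphism together with $\omega(x_1\cdots x_p)=\frac{1}{p!}\sum_{\sigma\in\SS_p}x_{\sigma(1)}\cdots x_{\sigma(p)}$, one obtains
\[
\delta(\omega(f)) \;=\; \frac{1}{p!}\sum_{\sigma\in\SS_p}\prod_{j=1}^{p}\bigl(x_{\sigma(j)}\otimes 1\;+\;1\otimes\pi^\mu(x_{\sigma(j)})\bigr).
\]
Expanding binomially, each of the $2^p$ summands is indexed by a subset $S\subseteq\{1,\dots,p\}$ recording which positions supply the first-slot factor $x\otimes 1$; a summand with $|S|=k$ has $U(\g)$-filtered degree $k$ and contributes to $U^q(\g)\otimes\End(V^\mu)$ only for $q\le k$.

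For \eqref{leading}, only $|S|=p$ can contribute in $U(\g)$-degree $p$, giving $x_{\sigma(1)}\cdots x_{\sigma(p)}\otimes 1$; averaging over $\sigma$ yields exactly $\omega(f)\otimes 1$, with no remainder in lower PBW degrees since by definition $\frac{1}{p!}\sum_\sigma x_{\sigma(1)}\cdots x_{\sigma(p)}=\omega(f)$. Hence $\delta^p(\omega(f))=\omega(f)\otimes 1$.

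For \eqref{linear}, the component in $U^{p-1}(\g)\otimes\End(V^\mu)$ receives contributions only from $|S|=p-1$: fixing one position $i$ for the second slot, the componentwise product on $U(\g)\otimes\End(V^\mu)$ produces
\[
x_{\sigma(1)}\cdots \widehat{x}_{\sigma(i)}\cdots x_{\sigma(p)}\otimes \pi^\mu(x_{\sigma(i)}),
\]
whose PBW degree-$(p-1)$ part is $\omega\bigl(\prod_{j\neq\sigma(i)}x_j\bigr)\otimes\pi^\mu(x_{\sigma(i)})$. Reindexing by $k=\sigma(i)$ makes the sum over $i$ independent of $\sigma$, so the $\sigma$-average is $\sum_{k=1}^{p}\omega\bigl(\prod_{j\neq k}x_j\bigr)\otimes\pi^\mu(x_k)$. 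Passing to the associated graded $S^{p-1}(\g)\otimes\End(V^\mu)$ then gives
\[
\overline{\delta^{p-1}(\omega(f))} \;=\; \sum_{k=1}^{p}\Bigl(\prod_{j\neq k}x_j\Bigr)\otimes\pi^\mu(x_k) \;=\; (1\otimes\pi^\mu)\bigl(d(x_1\cdots x_p)\bigr),
\]
the last equality being the Leibniz rule for $d\colon S^*(\g)\to S^*(\g)\otimes\g$.

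The only point requiring real care is distinguishing the $U(\g)$-filtration from the PBW vector-space grading $U^q(\g)=\omega(S^q(\g))$ that appears in the statement; since multiplication in $U(\g)\otimes\End(V^\mu)$ is componentwise, non-commutativity in the first slot could in principle mix filtered degrees. The saving feature is that in the top two PBW degrees at most one $\pi^\mu$ factor appears, and the $\sigma$-average of the surviving first-slot word is literally a symmetrization, so it lands in $U^p$ respectively $U^{p-1}$ without leaking into lower graded pieces. No further obstacle is anticipated.
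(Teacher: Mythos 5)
Your proof is correct and follows the same core strategy as the paper: expand $\delta$ applied to a spanning set of $S^p(\g)$ and isolate the top two PBW‑graded pieces. The one genuine difference is the choice of spanning set. The paper works with ordered multi-index monomials $x^\nu=\tfrac{x_1^{\nu_1}\cdots x_d^{\nu_d}}{\nu_1!\cdots\nu_d!}\in U(\g)$, which are \emph{not} in $U^p(\g)=\omega(S^p(\g))$, and then invokes that $\{x^\nu+U_{p-1}(\g)\}$ is a basis of $U_p(\g)/U_{p-1}(\g)$ to transfer the computation to $\omega(\overline x^\nu)$; this leaves implicit a small bookkeeping step, namely that the discrepancy $x^\nu-\omega(\overline x^\nu)\in U_{p-1}(\g)$ does not pollute the $U^{p-1}$-component of interest. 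You instead work with symmetrized products $\omega(x_1\cdots x_p)=\tfrac{1}{p!}\sum_\sigma x_{\sigma(1)}\cdots x_{\sigma(p)}$, so the input is \emph{literally} in $U^p(\g)$, and the $\sigma$-average of the $|S|=p$ (resp.\ $|S|=p-1$) terms lands exactly in $U^p\otimes\End(V^\mu)$ (resp.\ $U^{p-1}\otimes\End(V^\mu)$). This makes the potentially delicate distinction between the PBW grading $U^q$ and the filtration $U_q$ transparent, and it is the point you correctly flag in your closing paragraph. The Leibniz-rule identification $\sum_k\bigl(\prod_{j\ne k}x_j\bigr)\otimes\pi^\mu(x_k)=(1\otimes\pi^\mu)(d(x_1\cdots x_p))$ agrees with the paper's $\sum_i\partial_{\overline x_i}(\overline x^\nu)\otimes\pi^\mu(x_i)$. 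In short: same computation, but your symmetrized choice of representative sidesteps a notational subtlety that the paper glosses over.
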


\begin{proof} Let $\{x_1,\dots,x_d\}$ be a basis of $\g$ and $\nu\in \N^d$ such that $|\nu|:=\nu_1+\dots+\nu_d=p$. Let 
${x}^\nu:=\frac{{x}_1^{\nu_1}\dots {x}_d^{\nu_d}}{\nu_1!\dots \nu_d!}\in U(\g)$ while its associated graded $\overline{x}^\nu:=\frac{\overline{x}_1^{\nu_1}\dots \overline{x}_d^{\nu_d}}{\nu_1!\dots \nu_d!}\in S^p(\g)$, where we used the notation $\overline{x}_i:=x_i\in \g$ to signify that it lives in $S^*(\g)$. 
Then we can determine \begin{align*}\delta(x^\nu)=\frac{(x_{1}\otimes 1+1\otimes \pi^\mu(x_{1}))^{\nu_1}}{\nu_1!}\cdots \frac{(x_{d}\otimes 1+1\otimes \pi^\mu(x_{d}))^{\nu_d}}{\nu_d!}.\end{align*} We see that \begin{align}\label{leadingproof}\delta^p(x^\nu)=x^\nu\otimes 1.\end{align} and that \begin{align*}\delta^{p-1}(x^\nu)=\sum_{\nu_i>0} \frac{{x}_1^{\nu_1}\dots {x_i^{\nu_i-1}}\dots {x}_d^{\nu_d}}{\nu_1!\dots(\nu_i-1)!\dots \nu_d!}  \otimes \pi^\mu(x_i).\end{align*} Thus \begin{align}\label{forma}\overline{\delta^{p-1}(x^\nu)}= \sum_{\nu_i>0}\frac{\overline{x}_1^{\nu_1}\dots {\overline{x}_i^{\nu_i-1}}\dots \overline{x}_d^{\nu_d}}{\nu_1!\dots(\nu_i-1)!\dots \nu_d!}  \otimes \pi^\mu(x_i)=\sum_{i=1}^d \frac{\partial}{\partial \overline{x}_i}(\overline{x}^\nu)  \otimes \pi^\mu(x_i)=\pi^\mu(d(\overline{x}^\nu)).\end{align} As $\{x^\nu +U_{p-1}(\g)\mid |\nu|=p\}$  form a basis by \cite[Theorem 2.1.11]{dixmier} for $U_p(\g)/U_{p-1}(\g)$ \eqref{leadingproof} implies \eqref{leading} and \eqref{forma} implies \eqref{linear}. The result follows.
\end{proof}

Denote by $$\calM^\mu(\g):=\langle \{\pi^\mu(dx)\}_{x\in S(\g)^\G}\rangle_{S(\g)^\G}\subset \calC^\mu$$ the {\em graded  medium algebra} or {\em medium algebra} for short. Then Lemma~\ref{lemma} implies the following

\begin{proposition} \label{associated} The associated graded of the standard filtration on $\calR^\mu(\g)$ induces $\overline{\calZ^\mu(\g)}\cong \calM^\mu(\g).$
\end{proposition}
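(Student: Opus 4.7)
The plan is to identify $\overline{\calZ^\mu(\g)}$ inside $\overline{\calR^\mu(\g)} \cong \calC^\mu(\g)$ as the graded subalgebra generated by the symbols of a convenient generating set of $\calZ^\mu(\g)$, using Lemma~\ref{lemma} to match these symbols with the generators of $\calM^\mu(\g)$.

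First I would choose the generating set carefully. Since $\calZ^\mu(\g) = \langle \delta(z)_{z \in Z(\g)}\rangle_{Z(\g)}$, it is generated as a $\C$-algebra by $Z(\g) \otimes 1$ together with $\delta(Z(\g))$, and equivalently by $Z(\g) \otimes 1$ together with $\{\delta(z) - z \otimes 1 : z \in Z(\g)\}$. Via the symmetrization bijection $\omega : S^*(\g)^\g \to Z(\g)$, I index these generators by $f \in S^p(\g)^\g$ as $\omega(f) \otimes 1$ and $\delta(\omega(f)) - \omega(f) \otimes 1$. By \eqref{leading} of Lemma~\ref{lemma}, the former lies in $\calR^\mu_p$ with symbol $f \otimes 1 \in S^p(\g) \otimes \End(V^\mu)$; by \eqref{linear} of the same lemma, the latter lies in $\calR^\mu_{p-1}$ with symbol $\pi^\mu(df) \in S^{p-1}(\g) \otimes \End(V^\mu)$.

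As $f$ ranges over $S^*(\g)^\g$, these two families of symbols are exactly the defining generators of $\calM^\mu(\g)$ as an $S^*(\g)^\g$-algebra. Hence, since $\overline{\calZ^\mu(\g)}$ is a graded subalgebra of $\calC^\mu(\g)$ containing these symbols, it contains $\calM^\mu(\g)$; to finish, one shows the reverse inclusion by expanding any $w \in \calZ^\mu_N$ as a $\C$-linear combination of monomials in the chosen generators, observing that each monomial's symbol is the corresponding product of generator symbols in $\calM^\mu(\g)$, and then summing the symbols of those monomials of actual filtration degree $N$ to obtain $\sigma_N(w) \in \calM^\mu(\g)_N$.

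The main obstacle will be handling possible cancellations of top-degree symbols among monomials whose expected filtration degree exceeds $N$: in such cases the naive degree-by-degree symbol extraction breaks down, and one must argue that the cancelling cluster of monomials can be rewritten as a $Z(\g)$-linear combination of generator monomials of strictly lower expected degree. I would handle this by induction on the maximum expected degree, leveraging the Kostant freeness $U(\g) \cong_{Z(\g)} Z(\g) \otimes H(\g)$ from~\eqref{kostant}—which respects the standard filtration—to ensure that each element of $\calZ^\mu_M$ admits an expression by generator monomials of degree $\leq M$, closing the induction and thereby establishing $\overline{\calZ^\mu(\g)} = \calM^\mu(\g)$.
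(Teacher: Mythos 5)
Your approach follows the paper's, which also derives the proposition directly from Lemma~\ref{lemma}: you correctly replace the naive generating set $\{\omega(f)\otimes 1,\ \delta(\omega(f))\}$ (whose symbols both equal $f\otimes 1$ and hence cannot possibly generate $\calM^\mu(\g)$) by $\{\omega(f)\otimes 1,\ \delta(\omega(f))-\omega(f)\otimes 1\}$, and read off from \eqref{leading}, \eqref{linear} that the symbols are $f\otimes 1$ and $\pi^\mu(df)$. This gives the inclusion $\calM^\mu(\g)\subseteq\overline{\calZ^\mu(\g)}$, which is all the paper's one-line proof explicitly supplies.

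Where you go further is in flagging that the reverse inclusion is not automatic: symbols of algebra generators need not generate the associated graded unless the presentation map is \emph{strict}, i.e.\ unless every $w\in\calZ^\mu_N$ admits an expression as a polynomial in the chosen generators of total filtration weight $\le N$. You are right that this is a genuine point — it is precisely the content that makes $\overline{\calZ^\mu(\g)}\subseteq\calM^\mu(\g)$ nontrivial, and the paper glosses over it. However, your proposed resolution is too vague to assess: invoking Kostant's freeness $U(\g)\cong Z(\g)\otimes H(\g)$ is plausible as a tool (it does make $\calR^\mu(\g)$, and its associated graded $\calC^\mu(\g)$, free over $Z(\g)$ resp.\ $S^*(\g)^\g$), but you do not explain how freeness of the ambient ring forces strictness of the generating map for the subalgebra $\calZ^\mu(\g)$, and in general $M\subseteq N$ both free of the same rank over a polynomial ring does not force $M=N$. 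A workable route would likely use the explicit description of $\calZ^\mu(\g)$ from Theorem~\ref{explicit} as $Z(\g)\otimes Z(\g)/I_\mu$ together with an analysis of the leading ideal of $I_\mu$, or alternatively an equality of Hilbert series; as written, the induction-on-expected-degree step is asserted rather than proved. So: same approach as the paper for the containment one way, a correctly identified but incompletely filled gap the other way.
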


\begin{remark} To an invariant polynomial $C\in Z(\g)\cong U(\t)/\!/W\sbullet$ we can associate the quantum $M$-operator $$M_C=\delta(C)-C-C(\mu)\in \calZ^\mu(\g).$$ The natural filtration on $\calZ^\mu(\g)$ induced from the filtration on $U(\g)$ will be induced from $fil(C)=\deg(C)$ and $fil(M_C)=\deg(C)-1$. We see that $\Phi(C)=\delta(C\circ -\sbullet)$ and $\Phi(\delta(C))=C\circ -\sbullet$. Thus $\Phi(M_C)=(C\circ -\sbullet) -\delta(C\circ -\sbullet)-C(\mu)$. When $C$ is homogeneous around $-\rho$ of degree $k=\deg(C)$ then $C\circ -\sbullet=(-1)^kC$ and $$\Phi(M_C)=(-1)^{k-1} M_C+((-1)^{k-1}-1)C(\mu)$$ and $$\Phi(C)=(-1)^kC+(-1)^k M_{C}+C(\mu).$$ Thus we see that $\Phi$ on the associated graded $\overline{\calZ^\mu(\g)}=\calM^\mu(\g)$ induces $(-1)^{deg}$. Similarly $-\sbullet$ induces $(-1)^{deg+age}$ while $-\sbullet \Phi$ induces $(-1)^{age}$ on the graded medium algebra $\calM^\mu(\g)$, where $age(C)=0$ and $age(M_C)=1$. Thus, the Klein group symmetry of Remark~\ref{klein} on $\calZ^\mu(\g)$ quantizes the Klein group symmetry $\{1,(-1)^{deg},(-1)^{age},(-1)^{deg+age}\}$ on $\calM^\mu(\g)$ when $S^\mu=-S^{\mu}$.
\end{remark}

\begin{remark} \cite[Theorem 6.3]{higson} showed that the representations of the Kirillov algebra $\calC^\mu(\g)$ are related to the representations of the Cartan motion group $\G \ltimes \p$. Using its representation theory, which has been developed in \cite{champetier-delorme}, we expect that a similar analysis as in the Kostant algebra case in Section~\ref{kostants} will give an explicit presentation for the graded medium algebra $\calM^\mu(\g)$. We plan to return to this problem. 
\end{remark}

\section{Example of \texorpdfstring{$\g=\sl_2$}{g=sl2
}}

Let $\g=\sl_2$ and $\mu=5\varpi_1\in \Lambda$, where $\varpi_1\in \Lambda$ is the fundamental weight. Then let $C_2=\lambda(\lambda+2)$ be the Casimir operator on a representation of highest weight $\lambda\in \t^*$. Thus we can identify $Z(\g)\cong \C[C_2]$. We can define $$\widetilde{M}_1:=\delta(C_2)\in Z^{5\varpi_1}(\sl_2).$$  

To understand our Theorem~\ref{explicit} in this case, we first depict in Figure~\ref{lines} $\calV^{5\varpi_1}$ the lines $(\lambda,\lambda+\mu_i)$ in $\t^*\times \t^*$ for $\mu_i$ in the set of weights $S^{5\varpi_1}=\{-5,-3,-1,1,3,5\}$ of $V^{5\varpi_1}$. The set $\calV^{5\varpi_1}$ parametrizes principal series representations $X(\xi,\psi)$ which, as a $\k$-module, contain $V^{5\varpi_1}$. %This will retrospectively give a resolution of the base change $Z_\t^{5\varpi_1}(\sl_2)=Z^{5\varpi_1}(\sl_2)\otimes_{\C[\sl_2]^{\SL_2}} \C[\t]$.

\begin{figure}[ht!] 
\begin{center}
  \includegraphics[width=7cm]{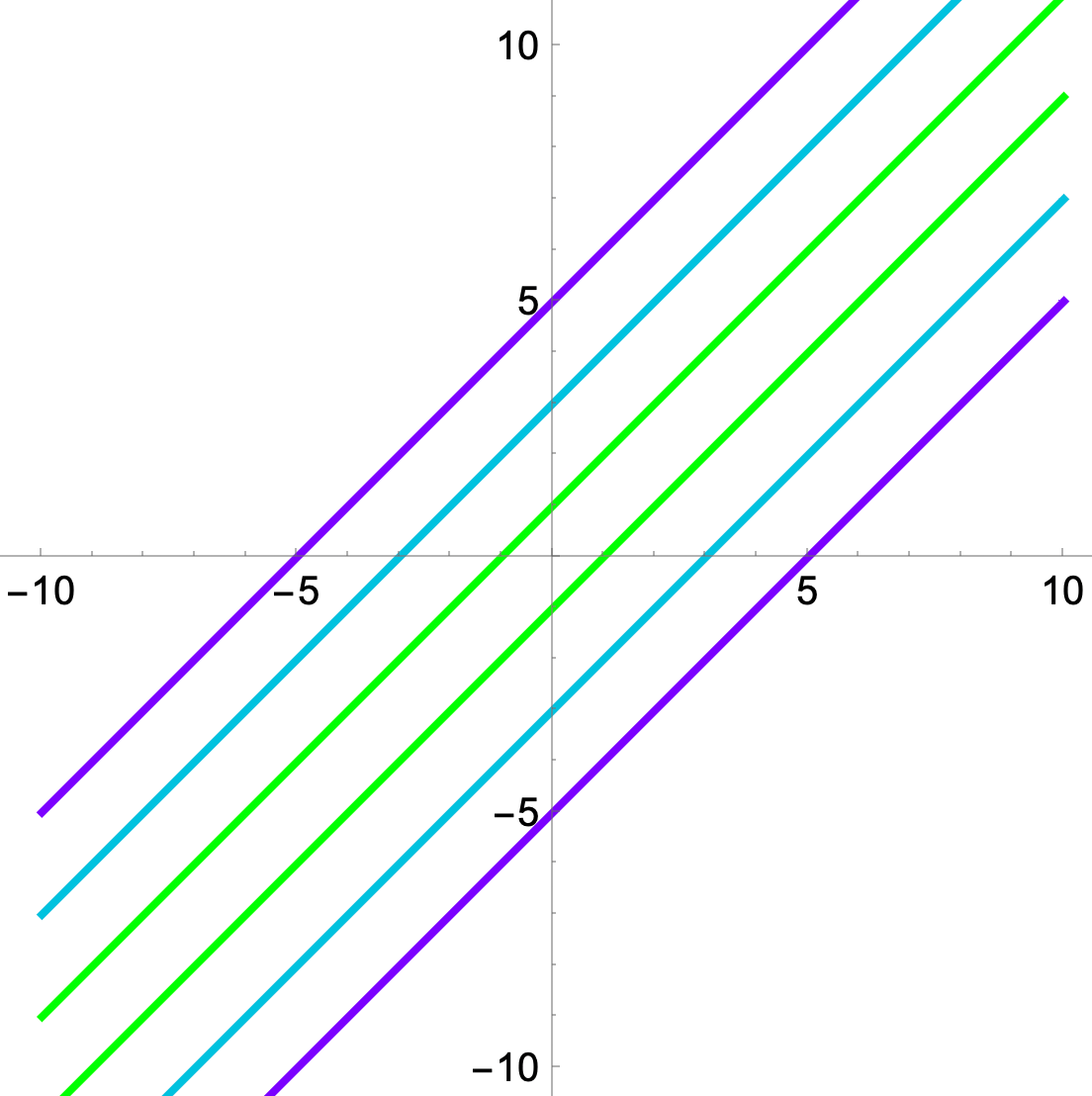}
\end{center}
\caption{\label{lines} The parameter space $\calV^{5\varpi_1}.$ }
\end{figure}

The image of these lines in the quotient $\t^*/\!/W\!\sbullet\times \t^*/\!/ W\sbullet$  in Figure~\ref{symmetricsl2} will be $\Spec(Z^{5\varpi_1})$ by Theorem~\ref{explicit}. Explicitely we get 
$Z^{5\varpi_1}=\C[C_2,\widetilde{M}_1]/((C_2^2 - 2C_2\widetilde{M}_1 + \widetilde{M}_1^2 - 2C_2 - 2\widetilde{M}_1 - 3)\\ (C_2^2 - 2C_2\widetilde{M}_1 + \widetilde{M}_1^2 - 50C_2 - 50\widetilde{M}_1 + 525)(C_2^2 - 2C_2\widetilde{M}_1 + \widetilde{M}_1^2 - 18C_2 - 18\widetilde{M}_1 + 45))$. Both this explicit form and the graph satisfy the symmetry $C_2 \leftrightarrow \widetilde{M}_1$ of Corollary~\ref{symmetry}.

\begin{figure}[ht!] 
\begin{center}
  \includegraphics[width=7cm]{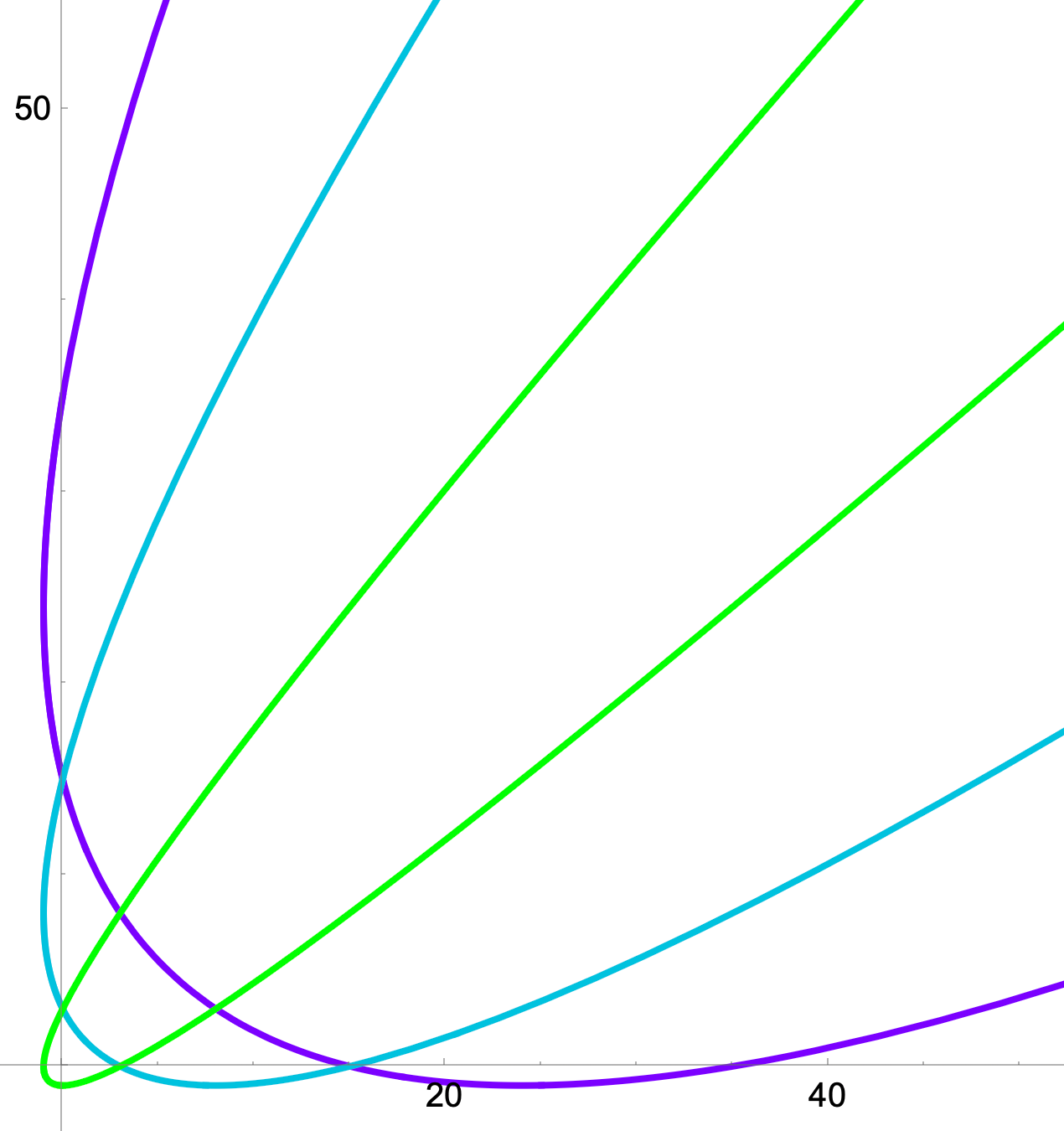}
\end{center}
\caption{\label{symmetricsl2}
 {$\Spec(Z^{5\varpi_1}(\sl_2))$.}}
\end{figure}

From   $\Spec(Z^{5\varpi_1} (\sl_2))$, using \eqref{verma}, we can read off the decomposition of $M_\lambda\otimes V^{5\varpi_1}$, for dominant $\lambda$ --- when $M_\lambda=P_\lambda$ by \cite[Proposition 3.8.(a)]{humphreys} --- into projective indecomposables as follows. One can compare these with the explicit formulas in \cite[\S 4]{troost}.

From Figure~\ref{symmetricsl2},    $\Spec(Z^{5\varpi_1} (\sl_2))\to \Spec(Z(\g))$ has singular fibers precisely at the discriminant locus $C_2=-1,0,3,8,15$, which correspond to highest weights $\lambda=-1,0,1,2,3$. Thus for other values of $C_2(\lambda)$ the tensor product $$M_\lambda\otimes V^\mu=\bigoplus_\nu M_\nu$$ will be decomposing as a direct sum of  Verma modules with distinct infinitesimal characters $C_2(\nu)$ given by the values of the graph of Figure~\ref{symmetricsl2} at such $\lambda$. 

While at $\lambda=-1$ we have $$M_{-1}\otimes V^{5\varpi_1}=P_{-6}\oplus P_{-4}\oplus P_{-2}.$$ The values of the graph at $-1$ are giving the infinitesimal characters $C_2(-2)=0,C_2(-4)=9,C_2(-6)=24$.

At $\lambda=0$ $$M_{0}\otimes V^{5\varpi_1}=P_{-5}\oplus P_{-3}\oplus M_{-1}\oplus M_{5},$$  their infinitesimal characters are $15,3,-1,35$ matching the values at $C_2=0$ of the graph in Figure~\ref{symmetricsl2}. 

At $\lambda=1$ we get $$M_1\otimes V^{5\varpi_1}=P_{-4}\oplus P_{-2}\oplus M_4\oplus M_6,$$ and the corresponding infinitesimal characters  $8,0,24,48$ matching the values of the graph. 

At $\lambda=2$ we get $$M_2\otimes V^{5\varpi_1}=P_{-3}\oplus M_{-1}\oplus M_3 \oplus M_5 \oplus M_7.$$
While at $\lambda=3$ we get $$M_3\otimes V^{5\varpi_1}=P_{-2}\oplus M_{2}\oplus M_4 \oplus M_6 \oplus M_8.$$ In both cases the infinitesimal characters appearing in the decomposition match the corresponding value of the graph. 

To compare with \cite[Proposition 5.1]{rozhkovskaya} we introduce 
$$M_1=\delta(C_2)-C_2\otimes 1 - 1\otimes C_2(5)\in Z^{5\varpi_1}(\sl_2)=\widetilde{M}_1-C_2\otimes 1 - 1\otimes C_2(5)\in Z^{5\varpi_1}(\sl_2).$$ 
Then rescaling $M_1$ and $C_2$ appropriately, Rozhkovskaya's formula reads that the minimal polynomial of $M_1$ over $\C[C_2]$ in $\calR^{k\varpi_1}(\sl_2)$ is 
$\prod_{j=0}^{k} (M-b_j)$ where $b_j=4(j^2 - n/2 - jn + (n/2 - j)\sqrt{C_2+1})$. In our case this gives:
$$Z^{5\varpi_1}(\sl_2)=\C[C_2,M_1]/(M_1^2 + 20M_1 - 100C_2)(M_1^2 + 52 M_1 - 36 C_2 + 640)(M_1^2 + 68 M_1 - 4 C_2 + 1152).$$ 

We depicted the graph in these coordinates in Figure~\ref{sl2}.

\begin{figure}[ht!] 
\begin{center}
  \includegraphics[width=7cm]{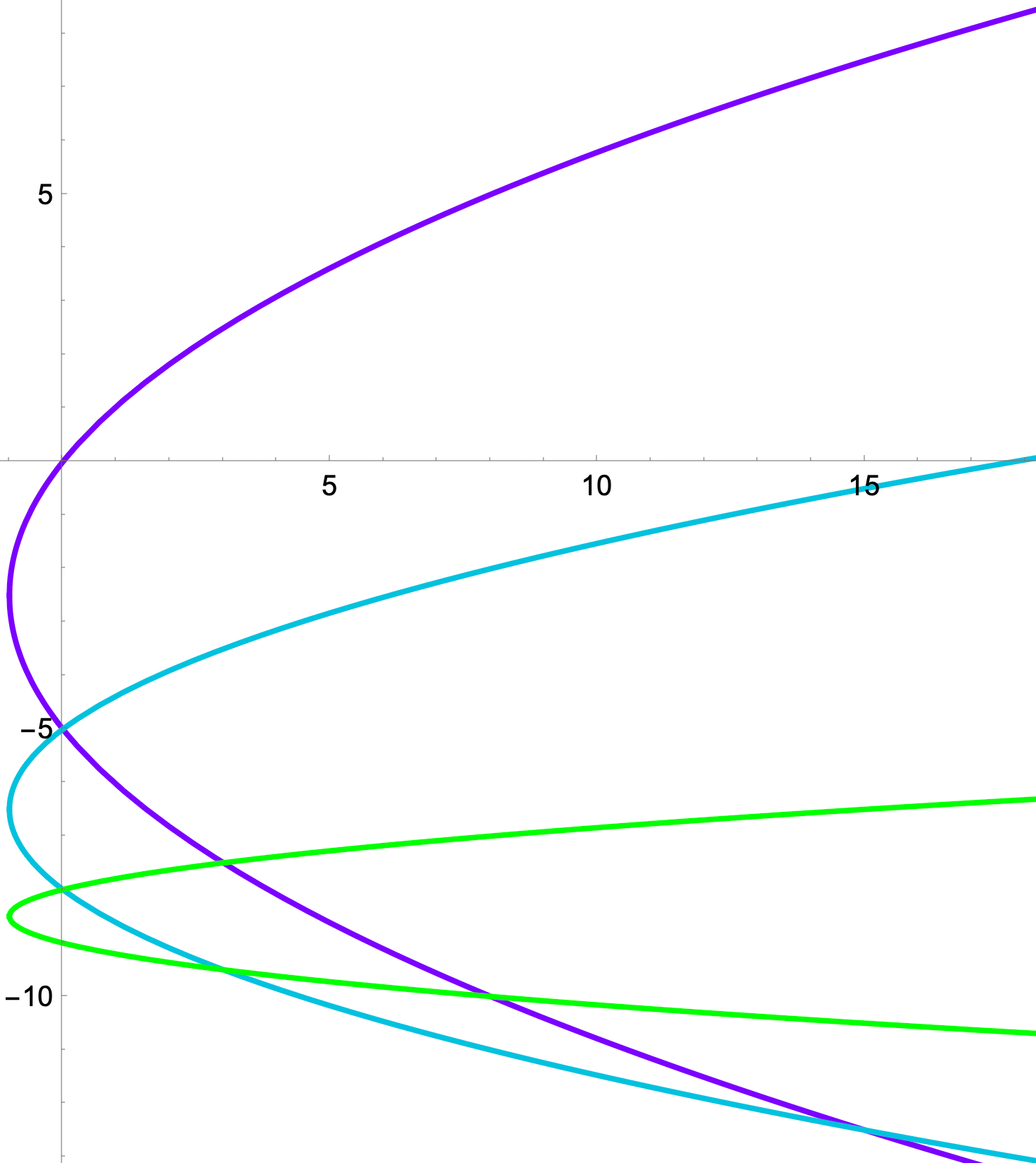}
\end{center}
\caption{\label{sl2}
 {$\Spec(Z^{5\varpi_1}(\sl_2))$.}}
\end{figure}

Finally, when we take in Proposition~\ref{associated} the associated graded $\calM^{5\varpi_1}(\sl_2)=\overline{Z^{5\varpi_1}(\sl_2))}$ of the standard filtration on $Z^{5\varpi_1}(\sl_2)$, then we get 

$$\calM^{5\varpi_1}(\sl_2)=\C[c_2,M_1]/(M_1^2  - 100c_2)(M_1^2  - 36 c_2 )(M_1^2  - 4 c_2).$$ 
The graph of which is depicted in Figure~\ref{kirillovsl2} (cf. also \cite[Fig 1]{hausel-big}). 

\begin{figure}[ht!] 
\begin{center}
  \includegraphics[width=7cm]{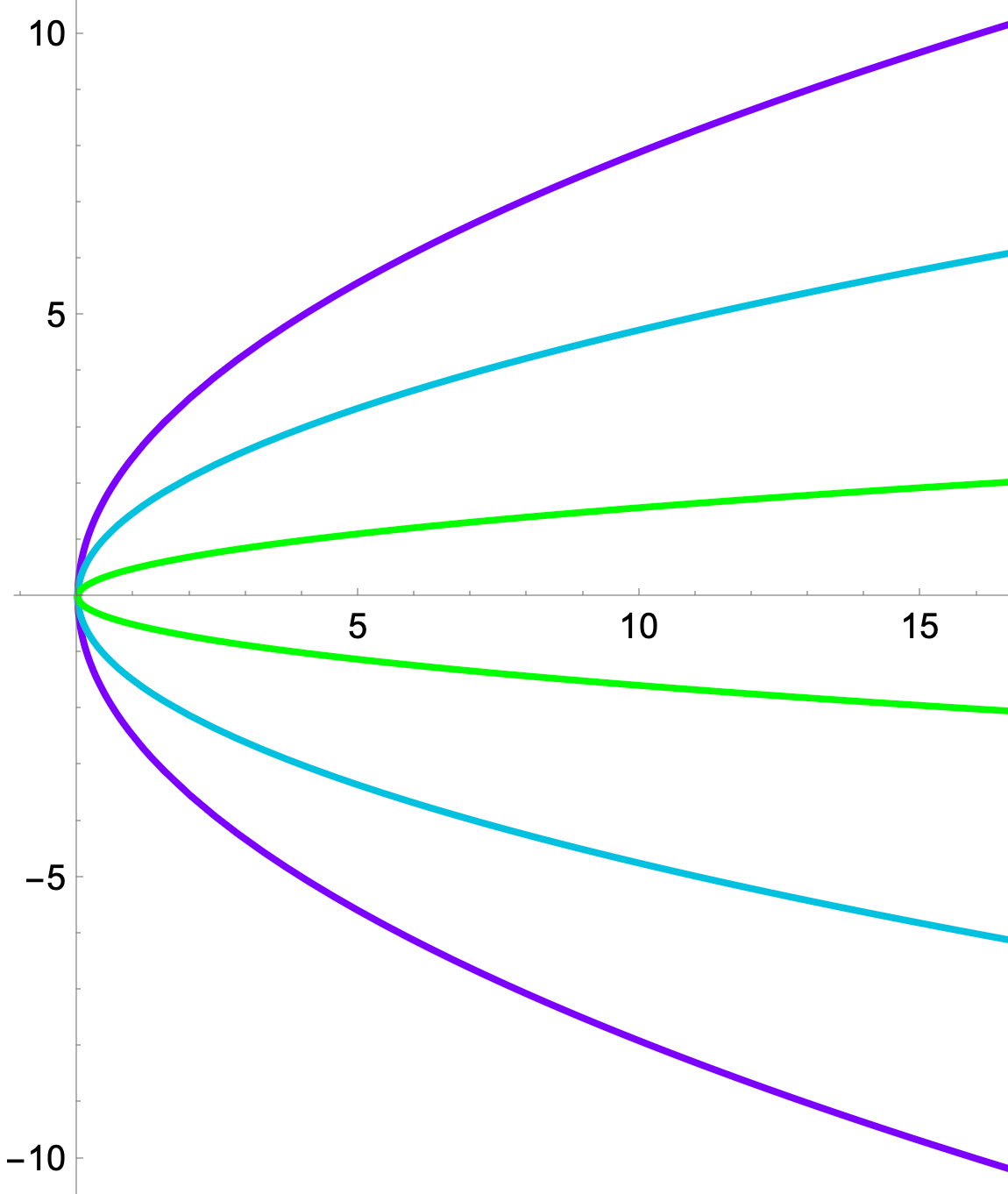}
\end{center}
\caption{\label{kirillovsl2}
 {$\Spec(\calM^{5\varpi_1}(\sl_2))$.}}
\end{figure}

As explained in \cite{hausel-ICM} one can see this picture as a model for the $\SL_2$-Hitchin system  on a certain Lagrangian multi-section (meaning  finite intersection with all fibers) of the Hitchin map. The fibers are all smooth, except at $0$ which corresponds to the nilpotent cone (cf. \cite{Laumon}).

Our last example in Figure~\ref{centerlorentz} is the odd component of $Z(\sl_2\times\sl_2,\sl_2)$ from Corollary~\ref{center} to see how the various centers $Z^{(2k+1)\varpi_1}(\sl_2)$ fit together. The intersection points of the parabolas correspond to  finite-dimensional irreducible representations of $\sl_2\times\sl_2$, while the other point at the resolution of the intersection corresponds to an infinite-dimensional simple  principal series representation which contains this finite-dimensional representation as a subquotient (cf. \cite[\S 12]{muic-savin}). 

Finally, the colouring from red to violet in the visible spectrum is showing the height of the "minimal type" of the principal series $X(\xi,\psi)$: the dominant weight $\mu\in \Lambda^+$ of lowest height such that $\Hom(V^\mu,X(\xi,\psi))\neq 0$. At the intersection points of the parabolas the two distinct simple subquotients have the minimal types indicated by the colours of the two branches.

\begin{figure}[ht!] 
\begin{center}
  \includegraphics[width=7.5cm]{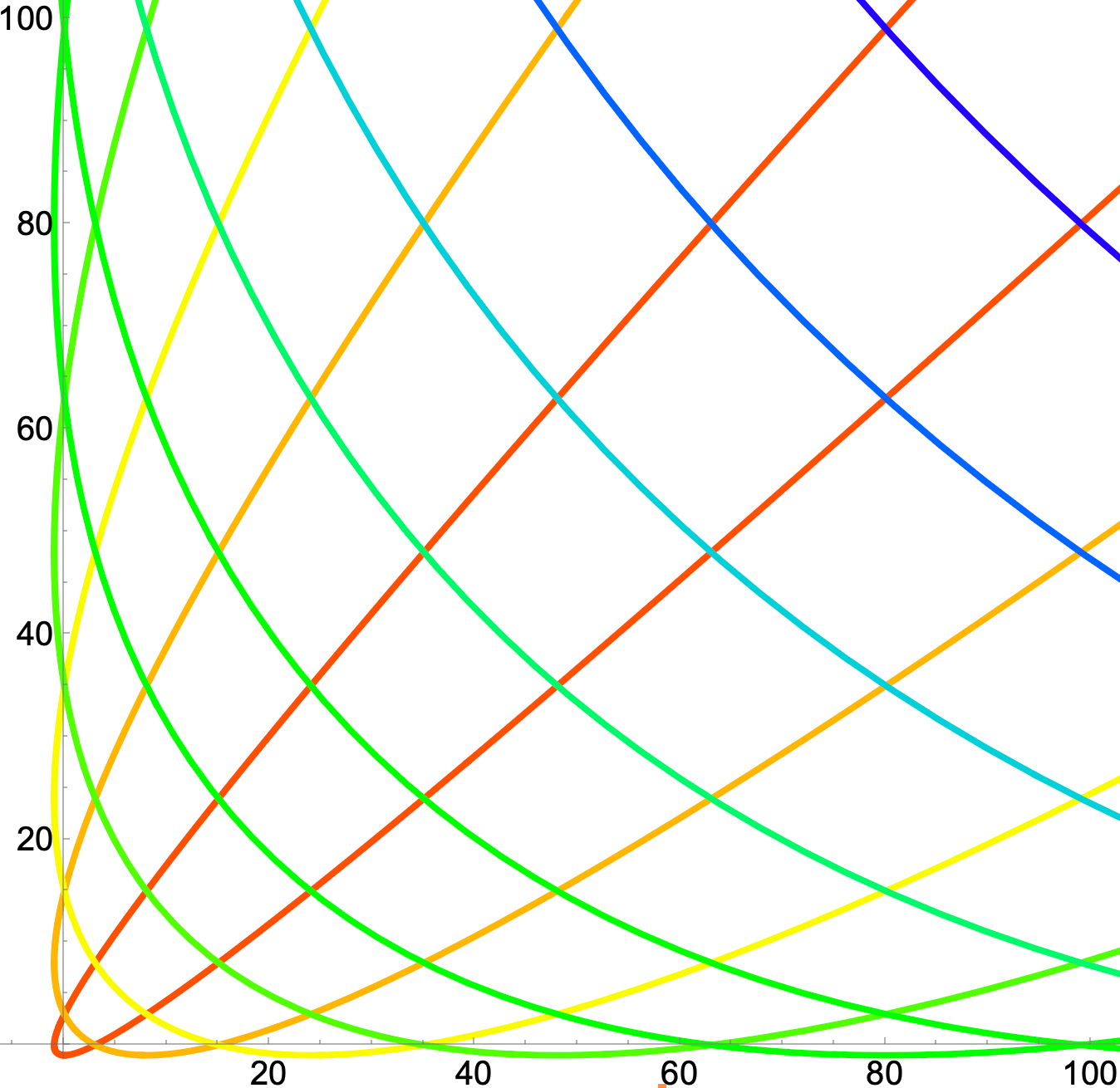}
\end{center}
\caption{\label{centerlorentz}
 {$\Spec(Z(\sl_2\times\sl_2,\sl_2)^-)$.}}
\end{figure}

\section{Conjectural relationships with Langlands duality}
\label{mirror}
The study of big and medium algebras \cite{hausel-big} was motivated by mirror symmetry considerations in \cite{hausel-hitchin,hausel-ICM,hausel-ICMtalk,hausel-simons}. In the semi-classical limit \cite{donagi-pantev} the geometric Langlands correspondence of Drinfeld--Laumon \cite{Laumon0} should give an equivalence of the form 
$$\calS:D^b({\mathbb M}_\G)\to D^b({\mathbb M}_{\G^\vee})$$ between the derived categories of coherent sheaves for Langlands dual Hitchin systems \begin{align*}\begin{array}{ccc}
    {\mathbb M}_\G && {\mathbb M}_{\G^\vee}
    \\ \!\! \! \!\! \!{\scriptstyle h_\G}{\downarrow} && \,\,\, \,\,\, \downarrow {\scriptstyle h_{\G^\vee}} \\ {\mathbb A}_\G &\cong &{\mathbb A}_{\G^\vee}  \end{array}.\end{align*} Here $\G$ is a complex reductive group with Langlands dual $\G^\vee$.  ${\mathbb M}_{\G}$ is the moduli space of $\G$-Higgs bundles  $(E,\Phi)$ on a smooth complex projective curve $C$, with $E$ principal $\G$-bundle on $C$ and $\Phi\in H^0(C;ad(E)\otimes K_C)$ is the canonical bundle $K_C$-twisted Higgs field. The Hitchin map \cite{hitchin-stable} is $h_\G:{\mathbb M}_{\G}\to {\A}_\G$ defined by evaluating invariant polynomials   $p\in \C[\g]^\G$, on $\g=Lie(\G)$ the Lie algebra of $\G$,  on the Higgs field $\Phi$. The generic fibers $h_{\G}^{-1}(a)$ and $h_{\G^\vee}^{-1}(a)$ should be \cite{hausel-thaddeus,donagi-pantev} dual Abelian varieties, and $\calS$ generically should be Fourier-Mukai transform. 

We expect \cite{donagi-pantev} that $\calS$ intertwines \begin{align}
\label{inter}\calH_c^\mu\circ \calS=\calS \circ \calW_c^\mu\end{align} the action of Wilson $\calW_c^\mu:D^b({\mathbb M}_\G)\to D^b({\mathbb M}_\G)$ and Hecke operators $\calH_c^\mu:=D^b({\mathbb M}_{\G^\vee})\to D^b({\mathbb M}_{\G^\vee})$. Here $c\in C$ a point, $\mu\in X^+(\G)$ is a dominant character, $\calW^\mu_c$ is defined by tensoring with the vector bundle $\rho^\mu(\bE_c)$, the universal $\G$-bundle $\bE$ on ${\mathbb M}_{\G}\times C$ restricted to ${\mathbb M}_{\G}\times \{c\}$ and evaluated in the $\mu$-highest weight representation $\rho^\mu:\G\to \GL(V^\mu)$. On the other hand $\calH_c^\mu$ is more technical to define, but heuristically, it should be given by Hecke transformations of Higgs bundles of type $\mu$ at the point $c$. When $\mu$ is minuscule in type $A$ this is defined \cite{hausel-hitchin} using the  elementary Hecke transformations of vector bundles on curves going back to \cite{narasimhan-ramanan}. % For more general $\mu$ the space of possible Hecke transformations should be given by the affine Schubert variety $\Gr^\mu\subset \Gr$ in the affine Grassmannian for $\G^\vee$. The condition for such a Hecke transformation on a Higgs bundle to preserve the Higgs field should be the vanishing of a certain vector field on $\Gr^\mu$. 

As $\calS$ is generically a Fourier-Mukai transform, which maps the structure sheaf of an Abelian variety to the skyscraper sheaf at the identity of the dual Abelian variety, we expect that the mirror \begin{align}
\label{structure} \calS(\calO_{\bM_\G})=\calO_{W^+_0} \end{align} of the structure sheaf  $\calO_{\bM_\G}$ of $\bM_\G$ is the structure sheaf of $W^+_0\subset \bM_{\G^\vee}$ a section of the Hitchin map $h_{\G^\vee}$, called the {\em Hitchin section} \cite{hitchin-section}. 

Combining \eqref{structure} with \eqref{inter} we should get \begin{align} \label{test}\calH^\mu(\calO_{W^+_0})=\calS(\calW^\mu_c(\calO_{\bM_\G}))=\calS(\rho^\mu(\bE_c)).\end{align} In words: the  Hecke transformed Hitchin section should be the  mirror of the universal bundle in a representation restricted to a point.

In type A, and for minuscule $\mu$ \cite{hausel-hitchin} indicated that $\calH^\mu(\calO_{W_0^+})=\calO_{W^+_\mu}$ the structure sheaf of $W^+_\mu$, upward flow (the Bialinicky-Birula attracting set with respect to the natural $\C^\times$-action, scaling the Higgs field) from a certain very stable Higgs bundle $\calE_\mu$, and in turn checked the expectation \eqref{test} in this type $A$ minuscule case \begin{align} \label{case} \calS(\rho^\mu(\bE_c))=\calO_{W^+_\mu} \end{align}   generically using Fourier-Mukai transform. In turn, \cite{fang} checked \eqref{case}, in some sense over the whole Hitchin base.

Because $\calO_{W^+_\mu}$ has the structure of sheaf of algebras we can ask what  that structure induces on its mirror $\rho^\mu(\bE_c)$? From the property of the Fourier-Mukai transform mapping multiplication to convolution we should have a certain sheaf of convolution algebra structure on $\rho^\mu(\bE_c)$. But over the Hitchin section (generically the locus of the identity elements of the Abelian variety fibers) this structure should just be a sheaf of algebra structure on the restriction of $\rho^\mu(\bE_c)$ to the Hitchin section of $h_\G$. In turn, when $\mu$ is minuscule, this structure should be induced from the algebra structure on the  medium algebra $\calM^\mu(\g)\cong \calC^\mu(\g)$ or equivalently, because $\mu$ minuscule,  the  Kirillov algebra from Section~\ref{graded}.

Long story \cite{hausel-simons} short, the motivation to construct a sheaf of algebra structure on $\rho^\mu(\bE_c)$ for a general $\mu\in \Lambda^+$ along the Hitchin section led \cite{hausel-big} to the study of the Kirillov algebras $\calC^\mu(\g)$, their medium algebras $\calM^\mu(\g)$ and  certain maximal commutative big algebras $\calB^\mu(\g)$ in between \begin{align}\label{chain}\C[g]^\g \subset \calM^\mu(\g)\subset \calB^\mu(\g) \subset \calC^\mu(\g).\end{align} Morally, $\calB^\mu(\g)$ over $\C[g]^\g$ should model the mirror $\calS(\rho^\mu(\bE_c))$ over   the Hitchin base $\C[\A_{\G^\vee}]$ and $\Spec(\calM^\mu(\g))\to \Spec(\C[g]^\g)$ should model the support  $Supp(\calS(\rho^\mu(\bE_c)))\stackrel{h_{\G^\vee}}{\to} \A_{\G^\vee}$  over the Hitchin base, which is conjecturally \cite[Conjecture 3.19]{hausel-ICM} a multi-section of the Hitchin map, given by the Lagrangian closure of the upward flow $W^+_\mu$. For an example when $\g=\sl_2$ and $\mu=5\varpi_1$ see Figure~\ref{kirillovsl2}.

In \cite{hausel-big} a filtered analogue (in Loc. cit. it was called quantum analogue) of the chain of subalgebras \eqref{chain} was constructed $$Z(\g)\subset \calZ^\mu(\g)\subset \calG^\mu(\g) \subset \calR^\mu(\g)$$ in the Kostant algebra $\calR^\mu(\g)$, where $\calZ^\mu(\g)$ is the filtered medium algebra  of Section~\ref{kostants}. Finally, the maximal commutative subalgebra $\calG^\mu(\g)$ is the filtered big algebra of \cite{hausel-big}, ultimately constructed from the Feigin-Frenkel center \cite{feigin-frenkel}. While these filtered versions have interesting properties refining those of the classical counterparts in \eqref{chain}, it is not quite clear what  their role is in the mirror symmetry / Langlands duality picture we have been discussing in this section. One possibility is that they could be modelling Lagrangians in the de Rham version of the moduli spaces $\bM_\G$ like opers, closer to the original geometric Langlands correspondence \cite{Laumon0,beilinson-drinfeld}. Another possibility is that they model certain Lagrangians in certain irregular Higgs moduli spaces on $C=\bP^1$, which have an exotic $\C^\times$-action, like the ones studied in \cite{feigin-frenkel-toledano}.

Nevertheless, what we found in this note is that the filtered medium algebras $\calZ^\mu(\g)$ have deep connections to the representation theory of complex Lie groups, which are relevant in the local Langlands correspondence at the complex place. In future work we plan to explore this representation theory from the point of view of the filtered big algebras $\calG^\mu(\g)$. 

\bibliographystyle{alpha}
\bibliography{sample}

\end{document}